\newtheorem{thm}{Theorem}[section]
\newtheorem{remark}[thm]{Remark}
\author{Fabio Silva Botelho \\
Department of Mathematics \\
Federal University of Santa Catarina - UFSC \\
Florian\'{o}polis, SC - Brazil}
\date{}
\title{\bf  A convex dual variational formulation for non-convex optimization applied to a non-linear model of plates }
\begin{document}
\maketitle

\abstract{This article develops   duality principles applicable to the non-linear Kirchhoff-Love model of plates.
The  results are obtained through standard tools of convex analysis, functional analysis, calculus of variations and duality theory. The main duality principle concerns a convex (in fact concave) dual variational formulation and related new optimality conditions for the model in question. Finally, in the last section we develop some global existence results for a similar model in elasticity.}

\section{Introduction}
 In this article, in a first step,  we develop  a new existence proof and  a dual variational formulation for the Kirchhoff-Love thin plate model. Previous results on existence in mathematical elasticity and related models may be found in \cite{903,[3],[4]}.

At this point we refer to the exceptionally important article "A contribution to contact problems for a class of solids and structures" by
W.R. Bielski and J.J. Telega, \cite{85},  published in 1985,  as the first one to successfully  apply and generalize the convex analysis approach to a model in non-convex and non-linear mechanics.

The present work is, in some sense, a kind of extension  of this previous work \cite{85} and others such as \cite{2900}, which greatly influenced and
inspired my work and recent books \cite{120,700}.

Here we highlight that such earlier results establish the complementary energy under the hypothesis of positive definiteness of the membrane force tensor at a critical point (please see  \cite{85,2900} for details).

We have obtained a dual variational formulation which allows the global optimal point in question not to be positive definite (for  related results see F.S. Botelho \cite{120}), but also not necessarily negative definite. The approach developed also includes sufficient conditions of optimality for the primal problem.
It is worth mentioning that the standard tools of convex analysis used in this text may be found in \cite{[6],120}, for example.

At this point we start to describe the primal formulation.

    Let $\Omega\subset\mathbb{R}^2$ be an open, bounded, connected set  which  represents the middle surface of a plate
of thickness $h$. The boundary of $\Omega$, which is assumed to be regular (Lipschitzian), is
denoted by $\partial \Omega$. The vectorial basis related to the cartesian
system $\{x_1,x_2,x_3\}$ is denoted by $( \textbf{a}_\alpha,
\textbf{a}_3)$, where $\alpha =1,2$ (in general Greek indices stand
for 1 or 2), and where $\textbf{a}_3$ is the vector normal to $\Omega$, whereas $\textbf{a}_1$ and $\textbf{a}_2$ are orthogonal vectors parallel to $\Omega.$  Also,
$\textbf{n}$ is
the outward normal to the plate surface.

    The displacements will be denoted by
$$ \hat{\textbf{u}}=\{\hat{u}_\alpha,\hat{u}_3\}=\hat{u}_\alpha
\textbf{a}_\alpha+ \hat{u}_3 \textbf{a}_3.
$$

The Kirchhoff-Love relations are
\begin{eqnarray}&&\hat{u}_\alpha(x_1,x_2,x_3)=u_\alpha(x_1,x_2)-x_3
w(x_1,x_2)_{,\alpha}\; \nonumber \\ && \text{ and }
\hat{u}_3(x_1,x_2,x_3)=w(x_1,x_2).
\end{eqnarray}

Here $ -h/2\leq x_3 \leq h/2$ so that we have
$u=(u_\alpha,w)\in U$ where
\begin{eqnarray}
U&=&\left\{(u_\alpha,w)\in W^{1,2}(\Omega; \mathbb{R}^2) \times
W^{2,2}(\Omega) , \right.\nonumber \\ &&\; u_\alpha=w=\frac{\partial w}{\partial \textbf{n}}=0 \left.
\text{ on } \partial \Omega \right\} \nonumber \\ &=&W_0^{1,2}(\Omega; \mathbb{R}^2) \times W_0^{2,2}(\Omega).\nonumber\end{eqnarray}
It is worth emphasizing that the boundary conditions here specified refer to a clamped plate.

We define the operator $\Lambda: U \rightarrow Y \times Y$, where $Y=Y^*=L^2(\Omega; \mathbb{R}^{2 \times 2})$, by
$$\Lambda(u)=\{\gamma(u), \kappa(u)\},$$
$$\gamma_{\alpha\beta}(u)= \frac{u_{\alpha,\beta}+u_{\beta,\alpha}}{2}+\frac{w_{,\alpha} w_{,\beta}}{2},$$
$$\kappa_{\alpha \beta}(u)=-w_{,\alpha \beta}.$$
The constitutive relations are given by
\begin{equation}
N_{\alpha\beta}(u)=H_{\alpha\beta\lambda\mu} \gamma_{\lambda\mu}(u),
\end{equation}
\begin{equation}
M_{\alpha\beta}(u)=h_{\alpha\beta\lambda\mu} \kappa_{\lambda\mu}(u),
\end{equation}
where: $$\{H_{\alpha\beta\lambda\mu}\}=\left\{h \left(\frac{4\lambda^h \mu^h}{\lambda^h+2\mu^h}\delta_{\alpha\beta}\delta_{\lambda \mu}+2\mu^h(\delta_{\alpha \lambda} \delta_{\beta\mu}+\delta_{\alpha \mu} \delta_{\beta\lambda})\right)\right\}$$
 and
 $$\{h_{\alpha\beta\lambda\mu}\}=\frac{h^2}{3}\{H_{\alpha\beta\lambda\mu}\}$$
 are symmetric positive definite   fourth order tensors. Here $\lambda^h>0$ and $\mu^h>0$ are the Lam\'{e} constants which depend on $h$ and $\{\delta_{\alpha\beta}\}$ is the Kronecker delta.

 From now on, in an appropriate sense, we denote $\{\overline{H}_{\alpha\beta \lambda \mu}\}=\{H_{\alpha\beta \lambda \mu}\}^{-1}$ and $\{\overline{h}_{\alpha\beta \lambda \mu}\}=\{h_{\alpha\beta \lambda \mu}\}^{-1}$.

 Furthermore
 $\{N_{\alpha\beta}\}$ denote the membrane force tensor and
 $\{M_{\alpha\beta}\}$ the moment one.
    The plate stored energy, represented by $(G\circ
    \Lambda):U\rightarrow\mathbb{R}$ is expressed by
 \begin{equation}\label{80} (G\circ\Lambda)(u)=\frac{1}{2}\int_{\Omega}
  N_{\alpha\beta}(u)\gamma_{\alpha\beta}(u)\;dx+\frac{1}{2}\int_{\Omega}
  M_{\alpha\beta}(u)\kappa_{\alpha\beta}(u)\;dx
  \end{equation}
 and the external work, represented by $F:U\rightarrow\mathbb{R}$, is given by
   \begin{equation}\label{81} F(u)=\langle w,P \rangle_{L^2(\Omega)}+\langle u_\alpha,P_\alpha \rangle_{L^2(\Omega)}
,\end{equation} where $P, P_1, P_2 \in L^2(\Omega)$ are external loads in the directions $\textbf{a}_3$, $\textbf{a}_1$ and $\textbf{a}_2$ respectively. The potential energy, denoted by
$J:U\rightarrow\mathbb{R}$ is expressed by:
$$
J(u)=(G\circ\Lambda)(u)-F(u)
$$

Finally, we also emphasize from now on, as their meaning are clear, we may denote $L^2(\Omega)$ and $L^2(\Omega; \mathbb{R}^{2 \times 2})$ simply by $L^2$, and the respective norms by $\|\cdot \|_2.$ Moreover derivatives are always understood in the distributional sense, $\mathbf{0}$ may denote the zero vector in  appropriate Banach spaces and, the following and relating notations are used:
$$w_{,\alpha\beta}=\frac{\partial^2 w}{\partial x_\alpha \partial x_\beta},$$
$$u_{\alpha,\beta}=\frac{\partial u_\alpha}{\partial x_\beta},$$
$$N_{\alpha\beta,1}=\frac{\partial N_{\alpha\beta}}{\partial x_1},$$
and
$$N_{\alpha\beta,2}=\frac{\partial N_{\alpha\beta}}{\partial x_2}.$$
\section{On the existence of a global minimizer}

At this point we present an existence result concerning the Kirchhoff-Love plate model.

We start with the following two remarks.
\begin{remark}\label{us123W}Let $\{P_\alpha\} \in L^\infty(\Omega;\mathbb{R}^2)$. We may easily obtain by appropriate Lebesgue integration $\{\tilde{T}_{\alpha \beta}\}$ symmetric and such that

 $$\tilde{T}_{\alpha\beta,\beta}=-P_\alpha, \text{ in } \Omega.$$

 Indeed, extending $\{P_\alpha\}$ to zero outside $\Omega$ if necessary, we may set
 $$\tilde{T}_{11}(x,y)=-\int_0^xP_1(\xi,y)\;d\xi,$$
  $$\tilde{T}_{22}(x,y)=-\int_0^yP_2(x,\xi)\;d\xi,$$
  and $$\tilde{T}_{12}(x,y)=\tilde{T}_{21}(x,y)=0, \text{ in } \Omega.$$

 Thus, we may choose a $C>0$ sufficiently big, such that $$\{T_{\alpha\beta}\}=\{\tilde{T}_{\alpha\beta} +C \delta_{\alpha\beta}\}$$ is positive definite $\text{ in } \Omega$, so that

 $$T_{\alpha\beta,\beta}=\tilde{T}_{\alpha\beta,\beta}=-P_\alpha,$$

 where $$\{\delta_{\alpha\beta}\}$$ is the Kronecker delta.

 So, for the kind of boundary conditions of the next theorem, we do NOT have any restriction for the $\{P_\alpha\}$ norm.

 Summarizing, the next result is new and it is really a step forward concerning the previous one in Ciarlet \cite{[3]} and the previous results in \cite{700}. We emphasize this result and
 its proof through such a tensor $\{T_{\alpha\beta}\}$ are new, even though the final part of the proof is established through a standard procedure in
 the calculus of variations.

It is also worth mentioning  in the concerning primal formulation we have included a term denoted by $$J_5(u)=\langle \varepsilon_\alpha, u_\alpha^2\rangle_{L^2(\Gamma_t)}.$$

This term, even in the case where the positive $\varepsilon_\alpha \in L^2(\Gamma)$ is of small magnitude,  has an amazing structural stabling effect on the plate model in question and makes the primal energy functional bounded below. This feature makes viable the proof of this existence result for a more general set of boundary conditions. We highlight such a functional $J_5$ corresponds to adding a distribution of springs on the portion boundary $\Gamma_t$.

 Finally, we also highlight the duality principles and concerning optimality conditions are established through new functionals. Similar results may be found
 in \cite{120,700}. Indeed the results here developed represent some advances concerning those presented in \cite{120} in 2014 and in \cite{700} of 2020.
\end{remark}

\begin{thm}\label{usp10}Let $\Omega \subset \mathbb{R}^2$ be an open, bounded, connected set with a Lipschitzian boundary denoted by $\partial \Omega=\Gamma.$ Suppose $(G \circ \Lambda):U \rightarrow \mathbb{R}$ is defined by
$$G(\Lambda u)=G_1(\gamma (u))+G_2(\kappa (u)), \; \forall u \in U,$$
where
$$G_1(\gamma u)= \frac{1}{2}\int_\Omega H_{\alpha\beta\lambda\mu} \gamma_{\alpha\beta}(u)\gamma_{\lambda \mu}(u)\;dx,$$
and
$$G_2(\kappa u)=\frac{1}{2}\int_\Omega h_{\alpha\beta\lambda\mu}\kappa_{\alpha\beta}(u)\kappa_{\lambda \mu}(u)\;dx,$$
where
$$\Lambda(u)=(\gamma(u),\kappa(u))=(\{\gamma_{\alpha\beta}(u)\},\{\kappa_{\alpha\beta}(u)\}),$$
$$\gamma_{\alpha\beta}(u)=\frac{u_{\alpha,\beta}+u_{\beta,\alpha}}{2}+\frac{w_{,\alpha} w_{,\beta}}{2},$$
$$\kappa_{\alpha\beta}(u)=-w_{,\alpha\beta},$$
where,
 \begin{eqnarray} U&=&\left\{u=(u_\alpha,w)=(u_1,u_2,w) \in W^{1,2}(\Omega; \mathbb{R}^2) \times W^{2,2}(\Omega)\;:
\right. \nonumber \\ && \left.u_\alpha=w= \frac{\partial w}{\partial \mathbf{n}}=0, \text{ on } \Gamma_0\right\}.
\end{eqnarray}
Here $\partial \Omega =\Gamma_0 \cup \Gamma_t$ and the Lebesgue measures $$m_{\Gamma}(\Gamma_0 \cap \Gamma_t)=0,$$
and $$m_\Gamma (\Gamma_0)>0.$$

We also define, \begin{eqnarray}\langle u,\mathbf{f}\rangle_{L^2}&=&\langle w,P\rangle_{L^2(\Omega)}+\langle u_\alpha, P_\alpha \rangle_{L^2(\Omega)} \nonumber \\ &&+\langle P^t_\alpha, u_\alpha \rangle_{L^2(\Gamma_t)} +\langle P^t,w \rangle_{L^2(\Gamma_t)},
\end{eqnarray}
\begin{eqnarray} F_1(u)&=& -\langle w,P\rangle_{L^2(\Omega)}-\langle u_\alpha, P_\alpha \rangle_{L^2(\Omega)}-\langle P^t_\alpha, u_\alpha \rangle_{L^2(\Gamma_t)} \nonumber \\&&
-\langle P^t,w \rangle_{L^2(\Gamma_t)}+\langle \varepsilon_\alpha, u_\alpha^2 \rangle_{L^2(\Gamma_t)} \nonumber \\
&=& -\langle u,\mathbf{f}\rangle_{L^2}+\langle \varepsilon_\alpha, u_\alpha^2 \rangle_{L^2(\Gamma_t)}
\nonumber \\ &\equiv&  -\langle u,\mathbf{f}_1\rangle_{L^2}-\langle u_\alpha, P_\alpha \rangle_{L^2(\Omega)}+\langle \varepsilon_\alpha, u_\alpha^2 \rangle_{L^2(\Gamma_t)} ,
\end{eqnarray}
where  $$\langle u,\mathbf{f}_1\rangle_{L^2}=\langle u,\mathbf{f}\rangle_{L^2}-\langle u_\alpha, P_\alpha \rangle_{L^2(\Omega)},$$ $\varepsilon_\alpha \in L^2(\Gamma)$ is such that $\varepsilon_\alpha>0, \text{ in } \Gamma, \; \forall \alpha \in \{1,2\}$ and
$$\mathbf{f}=(P_\alpha,P) \in L^\infty(\Omega;\mathbb{R}^3).$$

Let $J:U \rightarrow \mathbb{R}$ be defined by
$$J(u)=G(\Lambda u)+F_1(u),\; \forall u \in U.$$
Assume there exists $\{c_{\alpha\beta}\} \in \mathbb{R}^{2 \times 2}$ such that $c_{\alpha\beta}>0,\; \forall \alpha,\beta \in \{1,2\}$ and $$G_2(\kappa(u)) \geq c_{\alpha\beta}\|w_{,\alpha\beta}\|_2^2,\; \forall u \in U.$$

Under such hypotheses, there exists $u_0 \in U$ such that
$$J(u_0)=\min_{u \in U} J(u).$$
\end{thm}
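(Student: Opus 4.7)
The plan is the direct method of the calculus of variations: I would show that $J$ is bounded below on $U$, extract from any minimizing sequence a weakly convergent subsequence in the reflexive space $U$, and finish by sequential weak lower semicontinuity. The structural obstacle is that $\gamma_{\alpha\beta}(u) = (u_{\alpha,\beta}+u_{\beta,\alpha})/2 + w_{,\alpha}w_{,\beta}/2$ is not convex in $u$; this complicates coercivity, because the in-plane load $\langle u_\alpha, P_\alpha\rangle_{L^2(\Omega)}$ is not directly controlled by $G_1(\gamma(u))$ (the cross term $\int e_{\alpha\beta}w_{,\alpha}w_{,\beta}$ has no sign), and it also requires care in the lower semicontinuity step.

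For the coercivity step the central device is the symmetric, positive definite tensor $\{T_{\alpha\beta}\}$ constructed in Remark~\ref{us123W}, which satisfies $T_{\alpha\beta,\beta}=-P_\alpha$ in $\Omega$. Using $u_\alpha=0$ on $\Gamma_0$ and the symmetry of $T$, integration by parts gives
\begin{equation*}
-\langle u_\alpha,P_\alpha\rangle_{L^2(\Omega)} = -\int_\Omega T_{\alpha\beta}\gamma_{\alpha\beta}(u)\,dx + \tfrac12\int_\Omega T_{\alpha\beta}w_{,\alpha}w_{,\beta}\,dx + \int_{\Gamma_t} T_{\alpha\beta}u_\alpha n_\beta\,d\Gamma.
\end{equation*}
Because $G_1$ is a positive definite quadratic form in $\gamma$, a pointwise Cauchy--Young inequality yields $G_1(\gamma(u))-\int_\Omega T_{\alpha\beta}\gamma_{\alpha\beta}(u)\,dx\geq \tfrac12 G_1(\gamma(u))-C_T$ with $C_T$ depending only on the data; positive definiteness of $T$ gives $\tfrac12\int_\Omega T_{\alpha\beta}w_{,\alpha}w_{,\beta}\,dx\geq c_T\|\nabla w\|_2^2$; the hypothesis on $G_2$ together with the clamped conditions on $\Gamma_0$ (which have positive $\Gamma$-measure) and Poincar\'e controls $\|w\|_{W^{2,2}}^2$; the trace integral on $\Gamma_t$ is absorbed by the stabilizing term $\langle\varepsilon_\alpha,u_\alpha^2\rangle_{L^2(\Gamma_t)}$ via Cauchy--Young; and the remaining linear terms in $\langle u,\mathbf{f}_1\rangle_{L^2}$ are dominated by Cauchy--Schwarz and the trace inequality. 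Using the 2D embedding $W^{2,2}(\Omega)\hookrightarrow W^{1,4}(\Omega)$ to bound $\||\nabla w|^2\|_2$ by $\|w\|_{W^{2,2}}^2$, I get control of $\|e(u)\|_2$ from $\gamma(u)$ and $w$; a Korn-type inequality (legitimate since $m_\Gamma(\Gamma_0)>0$) then controls $\|u_\alpha\|_{W^{1,2}}$. The net estimate has the form $J(u)\geq c\|u\|_U^2-c'\|u\|_U-K$, so $J$ is bounded below and any minimizing sequence $\{u_n\}$ is bounded in $U$.

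Passing to a subsequence, $u_{n,\alpha}\rightharpoonup u_{0,\alpha}$ in $W^{1,2}(\Omega)$ and $w_n\rightharpoonup w_0$ in $W^{2,2}(\Omega)$, with the clamped conditions on $\Gamma_0$ retained in the limit, so $u_0\in U$. The compact embedding $W^{2,2}(\Omega)\hookrightarrow W^{1,4}(\Omega)$ yields $\nabla w_n\to\nabla w_0$ strongly in $L^4$, hence $w_{n,\alpha}w_{n,\beta}\to w_{0,\alpha}w_{0,\beta}$ in $L^2$; combined with the weak $L^2$ convergence of $u_{n,\alpha,\beta}$, this gives $\gamma(u_n)\rightharpoonup\gamma(u_0)$ and $\kappa(u_n)\rightharpoonup\kappa(u_0)$ in $Y$. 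The convex continuous quadratic forms $G_1$ and $G_2$ are then weakly lower semicontinuous; the nonnegative trace term $\langle\varepsilon_\alpha,u_\alpha^2\rangle_{L^2(\Gamma_t)}$ is weakly lower semicontinuous thanks to the compactness of the trace $W^{1,2}(\Omega)\to L^2(\Gamma)$; and the linear part $-\langle u,\mathbf{f}\rangle_{L^2}$ is weakly continuous. Collecting these, $J(u_0)\leq\liminf_n J(u_n)=\inf_U J$, so $u_0$ is a minimizer.

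I expect the hard part to be the coercivity step, and specifically the reliance on Remark~\ref{us123W}: without the auxiliary tensor $T_{\alpha\beta}$, the indefinite coupling $\int e_{\alpha\beta}w_{,\alpha}w_{,\beta}$ would force a smallness restriction on $\|P_\alpha\|$, as in Ciarlet's classical treatment~\cite{[3]}. The positive definite choice of $T$ converts this sign-indefinite term into a positive quadratic $\tfrac12\int T_{\alpha\beta}w_{,\alpha}w_{,\beta}$, while the boundary spring $\langle\varepsilon_\alpha,u_\alpha^2\rangle_{L^2(\Gamma_t)}$ absorbs the leftover trace contribution from the integration by parts, so no restriction on the magnitude of the in-plane loads is needed.
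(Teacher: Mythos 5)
Your proposal is correct and follows essentially the same route as the paper's proof: the direct method combined with the auxiliary positive definite tensor $\{T_{\alpha\beta}\}$ from Remark~\ref{us123W}, integration by parts against $\gamma_{\alpha\beta}(u)$ to turn the indefinite coupling into the nonnegative term $\tfrac12\int_\Omega T_{\alpha\beta}w_{,\alpha}w_{,\beta}\,dx$, Korn's inequality for the in-plane components, the compact embedding giving strong $L^4$ convergence of $\nabla w_n$, and weak lower semicontinuity of $G_1$, $G_2$ by convexity. Your version is, if anything, slightly more explicit than the paper's about absorbing the $\Gamma_t$ trace term into $\langle\varepsilon_\alpha,u_\alpha^2\rangle_{L^2(\Gamma_t)}$ and about the lower semicontinuity of that boundary term, but the argument is the same.
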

\begin{proof} Observe that we may find $\mathbf{T}_\alpha=\{(T_\alpha)_\beta\}$ such that
$$div \mathbf{T}_{\alpha}=T_{\alpha\beta,\beta}=-P_\alpha$$ an also such that $\{T_{\alpha\beta}\}$ is positive definite and symmetric (please, see Remark \ref{us123W}).

Thus defining \begin{equation}\label{a.1}v_{\alpha\beta}(u)= \frac{u_{\alpha,\beta}+u_{\beta,\alpha}}{2}+\frac{1}{2}w_{,\alpha}w_{,\beta},\end{equation} we obtain
\begin{eqnarray}\label{usp1}
J(u)&=&G_1(\{v_{\alpha\beta}(u)\})+G_2(\kappa(u))-\langle u,\mathbf{f}\rangle_{L^2}+\langle \varepsilon_\alpha, u_\alpha^2 \rangle_{L^2(\Gamma_t)} \nonumber \\ &=& G_1(\{v_{\alpha\beta}(u)\})+G_2(\kappa(u))+\langle T_{\alpha\beta,\beta},u_\alpha \rangle_{L^2(\Omega)}-\langle u,\mathbf{f}_1\rangle_{L^2}+\langle \varepsilon_\alpha, u_\alpha^2 \rangle_{L^2(\Gamma_t)}\nonumber \\ &=& G_1(\{v_{\alpha\beta}(u)\})+G_2(\kappa(u))-\left\langle T_{\alpha\beta},\frac{u_{\alpha,\beta}+u_{\beta,\alpha}}{2} \right\rangle_{L^2(\Omega)}
\nonumber \\ &&+\langle T_{\alpha\beta}n_\beta ,u_\alpha\rangle_{L^2(\Gamma_t)}-\langle u,\mathbf{f}_1\rangle_{L^2}+\langle \varepsilon_\alpha, u_\alpha^2 \rangle_{L^2(\Gamma_t)} \nonumber \\ &=& G_1(\{v_{\alpha\beta}(u)\})+G_2(\kappa(u))-\left\langle T_{\alpha\beta},v_{\alpha\beta}(u)-\frac{1}{2}w_{,\alpha}w_{,\beta} \right\rangle_{L^2(\Omega)}-\langle u,\mathbf{f}_1\rangle_{L^2}+\langle \varepsilon_\alpha, u_\alpha^2 \rangle_{L^2(\Gamma_t)} \nonumber \\ &&+\langle T_{\alpha\beta}n_\beta ,u_\alpha\rangle_{L^2(\Gamma_t)}\nonumber \\ &\geq& c_{\alpha\beta}\|w_{,\alpha\beta}\|_2^2+\frac{1}{2}\left\langle T_{\alpha\beta},w_{,\alpha}w_{,\beta} \right\rangle_{L^2(\Omega)}-\langle u,\mathbf{f}_1\rangle_{L^2}+\langle \varepsilon_\alpha, u_\alpha^2 \rangle_{L^2(\Gamma_t)}+G_1(\{v_{\alpha\beta}(u)\})\nonumber \\ &&-\langle T_{\alpha\beta},v_{\alpha\beta}(u)\rangle_{L^2(\Omega)}
+\langle T_{\alpha\beta}n_\beta ,u_\alpha\rangle_{L^2(\Gamma_t)}.
\end{eqnarray}

From this, since $\{T_{\alpha\beta}\}$ is positive definite,  clearly $J$ is bounded below.

Let $\{u_n\} \in U$ be a minimizing sequence for $J$. Thus there exists $\alpha_1 \in \mathbb{R}$ such that
$$\lim_{n \rightarrow \infty}J(u_n)= \inf_{u \in U} J(u)=\alpha_1.$$

From (\ref{usp1}), there exists $K_1>0$ such that
$$\|(w_n)_{,\alpha\beta}\|_2< K_1,\forall \alpha,\beta \in \{1,2\},\;n \in \mathbb{N}.$$

Therefore, there exists $w_0 \in W^{2,2}(\Omega)$ such that, up to a subsequence not relabeled,
$$(w_n)_{,\alpha\beta} \rightharpoonup (w_0)_{,\alpha\beta},\; \text{ weakly in } L^2,$$
 $\forall \alpha,\beta \in \{1,2\}, \text{ as } n \rightarrow \infty.$

Moreover, also up to a subsequence not relabeled,
\begin{equation}\label{a.2}(w_n)_{,\alpha} \rightarrow (w_0)_{,\alpha},\; \text{ strongly in } L^2  \text{ and } L^4,\end{equation}
 $\forall \alpha, \in \{1,2\}, \text{ as } n \rightarrow \infty.$

Also from (\ref{usp1}), there exists $K_2>0$ such that,
$$\|(v_n)_{\alpha\beta}(u)\|_2< K_2,\forall \alpha,\beta \in \{1,2\},\;n \in \mathbb{N},$$
and thus, from this, (\ref{a.1}) and (\ref{a.2}), we may infer that
there exists $K_3>0$ such that
$$\|(u_n)_{\alpha,\beta}+(u_n)_{\beta,\alpha}\|_2< K_3,\forall \alpha,\beta \in \{1,2\},\;n \in \mathbb{N}.$$

From this and Korn's inequality, there exists $K_4>0$ such that

$$\|u_n\|_{W^{1,2}(\Omega;\mathbb{R}^2)} \leq K_4, \; \forall n \in \mathbb{N}.$$
So, up to a subsequence not relabeled, there exists $\{(u_0)_\alpha\} \in W^{1,2}(\Omega, \mathbb{R}^2),$ such that
$$(u_n)_{\alpha,\beta}+(u_n)_{\beta,\alpha} \rightharpoonup (u_0)_{\alpha,\beta}+(u_0)_{\beta,\alpha},\; \text{ weakly in } L^2,$$
 $\forall \alpha,\beta \in \{1,2\}, \text{ as } n \rightarrow \infty,$
and,
$$(u_n)_{\alpha} \rightarrow (u_0)_{\alpha},\; \text{ strongly in } L^2,$$
 $\forall \alpha \in \{1,2\}, \text{ as } n \rightarrow \infty.$

Moreover, the boundary conditions satisfied by the subsequences are also satisfied for $w_0$ and $u_0$ in a trace sense, so that
 $$u_0=((u_0)_\alpha,w_0) \in U.$$

From this, up to a subsequence not relabeled, we get
$$\gamma_{\alpha\beta}(u_n) \rightharpoonup \gamma_{\alpha\beta}(u_0), \text{ weakly in } L^2,$$
$\forall \alpha,\beta \in \{1,2\},$
and
$$\kappa_{\alpha\beta}(u_n) \rightharpoonup \kappa_{\alpha\beta}(u_0), \text{ weakly in } L^2,$$
$\forall \alpha,\beta \in \{1,2\}.$

Therefore, from the convexity of $G_1$ in $\gamma$ and $G_2$ in $\kappa$ we obtain
\begin{eqnarray}
\inf_{u \in U}J(u)&=& \alpha_1 \nonumber \\ &=& \liminf_{n \rightarrow \infty}J(u_n) \nonumber \\ &\geq&
J(u_0).
\end{eqnarray}

Thus, $$J(u_0)=\min_{u\in U}J(u).$$

The proof is complete.
\end{proof}
\section{The  duality principles}

Considering the statements and definitions of the previous sections, define again the functional $J:U \rightarrow \mathbb{R}$ by
\begin{eqnarray}
J(u)&=&\frac{1}{2}\int_\Omega h_{\alpha\beta\mu\lambda} \kappa_{\alpha\beta}(u) \kappa_{\lambda \mu}(u)\;dx
\nonumber \\ &=&\frac{1}{2}\int_\Omega H_{\alpha\beta\mu\lambda} \gamma_{\alpha\beta}(u) \gamma_{\lambda \mu}(u)\;dx
\nonumber \\ &&-\langle w,P \rangle_{L^2}-\langle u_\alpha,P_\alpha \rangle_{L^2}
\end{eqnarray}
where
$$(u_\alpha,w)=(u_1,u_2,w) \in U=W_0^{1,2}(\Omega, \mathbb{R}^2) \times W_0^{2,2}(\Omega),$$
$P \in L^2(\Omega),\; P_\alpha \in L^2(\Omega)$ for $\alpha \in \{1,2\}.$

Also
$$\kappa_{\alpha\beta}(u)=-w_{,\alpha\beta},$$
$$\gamma_{\alpha\beta}(u)=\frac{1}{2}(u_{\alpha,\beta}+u_{\beta,\alpha})+\frac{1}{2} w_{,\alpha}w_{,\beta}.$$

Here we define $\tilde{G}_1:U \rightarrow \mathbb{R}$ by
$$\tilde{G}_1(u)=G_1(\{w_{,\alpha\beta}\})=\frac{1}{2}\int_\Omega h_{\alpha\beta\mu\lambda} \kappa_{\alpha\beta}(u) \kappa_{\lambda \mu}(u)\;dx,$$
$\tilde{G}_2:U \rightarrow \mathbb{R}$ by
$$\tilde{G}_2(u)=G_2(\{u_{\alpha,\beta},w_{,\alpha}\})=\frac{1}{2}\int_\Omega H_{\alpha\beta\mu\lambda} \gamma_{\alpha\beta}(u) \gamma_{\lambda \mu}(u)\;dx
+\frac{K}{2}\langle w_{,\alpha},w_{,\alpha} \rangle_{L^2},$$
and $F:U \rightarrow \mathbb{R}$ by
$$F(u)=\frac{K}{2}\langle w_{,\alpha},w_{,\alpha} \rangle_{L^2}.$$

Moreover, denoting $Y=Y^*=W^{2,2}_0(\Omega),$ $Y_1=Y_1^*=L^2(\Omega;\mathbb{R}^{2 \times 2}),$ $Y_2=Y_2^*=L^2(\Omega;\mathbb{R}^2)$ and  $Y_3=Y_3^*=Y_1,$ define also $G_1^*:Y_3^* \times Y^* \rightarrow \mathbb{R}$ by
\begin{eqnarray}G_1^*(\tilde{M},z^*)&=&\sup_{ v_3 \in Y_3}\{ \langle (v_3)_{\alpha\beta}, \tilde{M}_{\alpha\beta}+z^* \delta_{\alpha\beta} \rangle_{L^2}-G_1(\{(v_3)_{\alpha\beta}\}) \nonumber \\ &=&
\frac{1}{2}\int_\Omega \overline{h}_{\alpha\beta\lambda\mu}(\tilde{M}_{\alpha\beta}+z^* \delta_{\alpha\beta})(\tilde{M}_{\lambda\mu}+z^* \delta_{\lambda\mu})\;dx,
\end{eqnarray}
$G_2^*:Y_1^* \times Y_2^* \rightarrow \mathbb{R}$ by
\begin{eqnarray}G_2^*(N,Q)&=&\sup_{ (v_1,v_2) \in Y_1 \times Y_2}\{ \langle (v_1)_{\alpha\beta},N_{\alpha\beta}\rangle_{L^2}+\langle (v_2)_\alpha,Q_\alpha \rangle_{L^2}-G_2(\{(v_1)_{\alpha\beta},(v_2)_\alpha\}) \nonumber \\ &=&
\frac{1}{2}\int_\Omega \overline{N^K_{\alpha\beta}}Q_\alpha Q_\beta\;dx+\frac{1}{2}\int_\Omega \overline{H}_{\alpha\beta\lambda\mu}N_{\alpha\beta}N_{\lambda\mu}\;dx,
\end{eqnarray}
in $B^*$ where
$$B^*=\{v^*=(N,Q,M) \in Y_1^*\times Y_2^* \times Y_3^*\;:\; \{N_{\alpha\beta}+K \delta_{\alpha\beta}\} \text{ is positive definite in } \Omega\}.$$
Here we have denoted $$\{\overline{N^K_{\alpha\beta}}\}=\{N_{\alpha\beta}+K\delta_{\alpha\beta}\}^{-1}.$$
Finally,  define

$F^*:Y^* \rightarrow \mathbb{R}$ by
\begin{eqnarray}F^*(z^*)&=&\sup_{u \in U}\{ -\langle \nabla w,\nabla z^* \rangle_{L^2}-F(u)\}
\nonumber \\ &=& \frac{1}{2K}\int_\Omega  |\nabla z^*|^2\;dx.\end{eqnarray}

Furthermore, define $$J^*: B^* \times Y^* \rightarrow \mathbb{R},$$ by
$$J^*(N,Q,\tilde{M},z^*)=-G_1^*(\tilde{M},z^*)-G_2^*(N,Q)+F^*(z^*),$$
$$J_1^*:B^* \times Y^* \rightarrow \mathbb{R}$$ by
\begin{eqnarray}J_1^*(N,Q,\tilde{M},z^*)&=& J^*(N,Q,\tilde{M},z^*) \nonumber \\ &&+
\frac{1}{2}\int_\Omega [C_0(\overline{h}_{11\lambda \mu}(\tilde{M}_{\lambda \mu}+z^*\delta_{\lambda \mu})_{,22}-\overline{h}_{22\lambda \mu}(\tilde{M}_{\lambda \mu}+z^*\delta_{\lambda \mu})_{,11})] \nonumber \\ && \times(\overline{h}_{11\lambda \mu}(\tilde{M}_{\lambda \mu}+z^*\delta_{\lambda \mu})_{,22}-\overline{h}_{22\lambda \mu}(\tilde{M}_{\lambda \mu}+z^*\delta_{\lambda \mu})_{,11})\;dx,
\end{eqnarray}
where $$C_0=(1-\varepsilon_3)(\overline{h}_{2222}D_{1111}-2\overline{h}_{1122}D_{11}D_{22}+\overline{h}_{1111}D_{2222})^{-1}$$ and
$0<\varepsilon_3<1$.

We suppose $\varepsilon_3$ is sufficiently close to $1$ so that $J_1^*$ is concave in $v^*=(N,Q,\tilde{M}),$ on $B^*$ (in fact, for this kind of tensor $\{h_{\alpha\beta\lambda\mu}\},$ through an analysis of the Hessian in question with the help of the softwares MATHEMATICA or MAPLE, we may infer that $J_1^*$ is concave in $v^*$ on $B^*$ for any value $0<\varepsilon_3<1).$

Here we  remark to generically denote for $y \in L^2(\Omega),$ $$(\overline{h}_{2222}D_{1111}-2\overline{h}_{1122}D_{11}D_{22}+\overline{h}_{1111}D_{2222})^{-1}(y)= w,$$ if and only if $$w \in W_0^{2,2}(\Omega)$$ and $$(\overline{h}_{2222}D_{1111}-2\overline{h}_{1122}D_{11}D_{22}+\overline{h}_{1111}D_{2222})[w]=y,$$ where also generically we have denoted $$D_{\alpha\beta \gamma \mu} [w]= w_{,\alpha\beta\lambda\mu}=\frac{ \partial ^4 }{\partial x_\alpha\;\partial x_\beta\;\partial x_\lambda \;\partial x_\mu}[w].$$

Moreover, define
$$J_2^*:Y^* \rightarrow \mathbb{R}$$ by

\begin{eqnarray}
J_2^*(z^*)&=& -\frac{1}{2}\int_\Omega \overline{h}_{\alpha\beta\lambda\mu}(z^* \delta_{\alpha\beta})(z^* \delta_{\lambda\mu})\;dx
\nonumber \\ &&+
\frac{1}{2}\int_\Omega [C_0(\overline{h}_{11\lambda \mu}(z^*\delta_{\lambda \mu})_{,22}-(\overline{h}_{22\lambda \mu}z^*\delta_{\lambda \mu})_{,11})] \nonumber \\ && \times(\overline{h}_{11\lambda \mu}(z^*\delta_{\lambda \mu})_{,22}-(\overline{h}_{22\lambda \mu}z^*\delta_{\lambda \mu})_{,11})\;dx \nonumber \\ &&
 +F^*(z^*)
 \end{eqnarray}
and
$J_3^*:B^* \times Y^* \times U \rightarrow \mathbb{R}$ by
\begin{eqnarray}J_3^*(N,Q,\tilde{M},z^*,u)&=&J^*(N,Q,\tilde{M},z^*)
\nonumber \\ &&+\langle w, \tilde{M}_{\alpha\beta, \alpha\beta}-Q_{\alpha, \alpha}-P \rangle_{L^2}
\nonumber \\ &&-\langle u_\alpha, N_{\alpha\beta,\beta}+P_\alpha \rangle_{L^2}
\end{eqnarray}
Also, define
$A^*=A_1 \cap A_2$ where
$$A_1=\{v^*=(N,Q,\tilde{M}) \in B^* \;:\; N_{\alpha\beta,\beta}+P_\alpha=0, \text{ in } \Omega\},$$
$$A_2=\{v^*=(N,Q,\tilde{M}) \in B^* \;:\; M_{\alpha\beta,\alpha\beta}-Q_{\alpha,\alpha}-P=0, \text{ in } \Omega\},$$

With such statements and definitions in mind we establish our main result, which is summarized by the following theorem.

\begin{thm} Denoting $v_0^*=(N_0,Q_0,\tilde{M}_0),$ suppose $(v_0^*,z_0^*,u_0) \in B^* \times Y^*\times U$ is such that
$$\delta J_3^*(v_0^*,z_0^*,u_0)=\mathbf{0}.$$
Under such hypotheses we have, $$\delta J(u_0)=\mathbf{0},$$ $$v_0^*=(N_0,Q_0, \tilde{M}_0) \in A^*$$ and
\begin{eqnarray}
J(u_0)&=& \inf_{ u \in U} \left\{J(u)+\frac{K}{2}\int_\Omega |\nabla w-\nabla w_0|^2\;dx\right\} \nonumber \\ &=&
J^*(v_0^*,z_0^*)  \nonumber \\ &=& \sup_{v^* \in A^*} J^*(v^*,z_0^*) =J_1^*(v_0^*,z_0^*).
\end{eqnarray}
\end{thm}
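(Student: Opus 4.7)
The plan is to read $J_3^*$ as the Lagrangian associated with the constrained concave maximization ``maximize $J^*(\cdot,z_0^*)$ over $A^*$'', with the primal displacement $u=(u_\alpha,w)\in U$ playing the role of the multiplier enforcing the two equilibrium constraints that define $A^*=A_1\cap A_2$. The stationarity $\delta J_3^*(v_0^*,z_0^*,u_0)=\mathbf{0}$ will then split into two packages: variations in $u_\alpha$ and $w$ will recover the equilibrium equations, while variations in $N$, $Q$, $\tilde M$, $z^*$ will recover the constitutive/compatibility equations.

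Concretely I would first vary $J_3^*$ in $u_\alpha$ and $w$. After integration by parts the arbitrariness of $\delta u_\alpha$ and $\delta w$ forces $(N_0)_{\alpha\beta,\beta}+P_\alpha=0$ and $(\tilde M_0)_{\alpha\beta,\alpha\beta}-(Q_0)_{\alpha,\alpha}-P=0$, which is exactly $v_0^*\in A_1\cap A_2=A^*$. Next, the variation in $\tilde M$ yields the bending constitutive relation $\overline h_{\alpha\beta\lambda\mu}((\tilde M_0)_{\lambda\mu}+z_0^*\delta_{\lambda\mu})=-w_{0,\alpha\beta}=\kappa_{\alpha\beta}(u_0)$; the variations in $N$ and $Q$ yield the membrane constitutive relation involving $\overline H\,N_0$ together with an identification of $Q_0$ in terms of $\nabla w_0$ via $(N_0+K\delta)^{-1}$; and the variation in $z^*$ gives the Poisson-type relation coming from $F^*$. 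Substituting these identifications into the two equilibrium equations and applying the chain rule yields $\delta J(u_0)=\mathbf{0}$, since the resulting system is precisely the Euler--Lagrange system of $J$.

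For the chain of equalities I would proceed as follows. The quantity $J(u)+\tfrac{K}{2}\int_\Omega|\nabla w-\nabla w_0|^2\,dx$ differs from $J(u)$ by a term whose first variation at $u=u_0$ vanishes, and the added penalty is designed precisely so that the resulting augmented functional becomes convex in $u$; hence $u_0$, being a critical point, is its global minimizer, establishing the first equality. Passing from the augmented primal to its Fenchel dual, with $(v^*,z^*)$ the conjugate variables to $(\Lambda u,\nabla w)$, gives the second equality via the Fenchel--Young identity, which becomes an equality exactly at an extremal triple $(u_0,v_0^*,z_0^*)$. Concavity of $J_1^*$ (and hence of $J^*$ after the $u$-Lagrangian is saddle-reduced) in $v^*$ on $B^*$, combined with the fact that $v_0^*\in A^*$ is a critical point of $J_3^*$ in $v^*$, yields $J^*(v_0^*,z_0^*)=\sup_{v^*\in A^*}J^*(v^*,z_0^*)$. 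Finally, the integrand appended to $J^*$ to form $J_1^*$ is a positive multiple of $\bigl(e_{11,22}-e_{22,11}\bigr)^2$ with $e_{\alpha\beta}:=\overline h_{\alpha\beta\lambda\mu}(\tilde M_{\lambda\mu}+z^*\delta_{\lambda\mu})$; the bending constitutive relation derived above gives $e_{\alpha\beta}=-w_{0,\alpha\beta}$ at $(v_0^*,z_0^*)$, so this extra integrand vanishes identically on $\Omega$ and $J_1^*(v_0^*,z_0^*)=J^*(v_0^*,z_0^*)$.

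The main obstacle will be justifying the first equality, i.e.\ that the augmented primal $J(u)+\tfrac{K}{2}\int_\Omega|\nabla w-\nabla w_0|^2\,dx$ is actually convex on the whole of $U$ and not merely strictly convex at $u_0$, so that the critical point $u_0$ is a genuine global minimizer rather than a saddle. This is where the positive-definiteness requirement $\{(N_0)_{\alpha\beta}+K\delta_{\alpha\beta}\}$ built into $B^*$, together with the freedom to enlarge $K$, becomes crucial: the penalty $\tfrac{K}{2}|\nabla w-\nabla w_0|^2$ must dominate the non-convex quartic-in-$\nabla w$ contribution hidden inside $\tfrac{1}{2}H_{\alpha\beta\lambda\mu}\gamma_{\alpha\beta}(u)\gamma_{\lambda\mu}(u)$. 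Once this convexification is secured, the remaining equalities reduce to Fenchel--Young identities and standard Lagrangian/concavity arguments from convex analysis.
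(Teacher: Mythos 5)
Your treatment of the optimality conditions matches the paper's: varying $J_3^*$ in $u$ yields the two equilibrium equations (hence $v_0^*\in A^*$), the variations in $\tilde M,Q,N,z^*$ yield the constitutive identifications (in particular $z_0^*=-Kw_0$ and $\overline h_{\alpha\beta\lambda\mu}((\tilde M_0)_{\lambda\mu}+z_0^*\delta_{\lambda\mu})=(w_0)_{,\alpha\beta}$, which kills the extra integrand in $J_1^*$ because $(w_0)_{,1122}-(w_0)_{,2211}=0$), and the Fenchel--Young equalities at the critical point give $J^*(v_0^*,z_0^*)=J(u_0)$. All of that is sound and is exactly what the paper does.

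The gap is in your route to the first equality. You propose to show that the augmented primal $J(u)+\frac{K}{2}\int_\Omega|\nabla w-\nabla w_0|^2\,dx$ is convex on all of $U$, so that the critical point $u_0$ is a global minimizer, and you suggest that the positive definiteness of $\{(N_0)_{\alpha\beta}+K\delta_{\alpha\beta}\}$ together with "the freedom to enlarge $K$" secures this. It does not: the augmented functional is convex only where $\{N_{\alpha\beta}(u)+K\delta_{\alpha\beta}\}$ is positive semi-definite. Already in the one-dimensional caricature $g(a,b)=\frac12\left(a+\frac12 b^2\right)^2+\frac{K}{2}(b-b_0)^2$ the Hessian determinant equals $a+\frac12 b^2+K$, which is negative whenever the "membrane force" $a+\frac12 b^2$ drops below $-K$; since $N_{\alpha\beta}(u)$ is unbounded below as $u$ ranges over $U$, no finite $K$ yields global convexity. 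So the obstacle you yourself flagged is real and your proposed fix fails. The paper sidesteps it entirely: because $G_1^*$, $G_2^*$ and $F^*$ are defined as suprema, the Fenchel--Young inequality gives, for \emph{every} $u\in U$,
$$-G_1^*(\tilde M_0,z_0^*)\le -\langle w_{,\alpha\beta},(\tilde M_0)_{\alpha\beta}+z_0^*\delta_{\alpha\beta}\rangle_{L^2}+G_1(\{w_{,\alpha\beta}\}),$$
and similarly for $G_2^*$ and $F^*$; integrating by parts via the constraints $v_0^*\in A^*$ turns the duality pairings into the load terms, and the identity $F(u)+\langle\nabla w,\nabla z_0^*\rangle_{L^2}+F^*(z_0^*)=\frac{K}{2}\int_\Omega|\nabla w-\nabla w_0|^2\,dx$ (using $z_0^*=-Kw_0$) completes the square. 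This yields $J^*(v_0^*,z_0^*)\le J(u)+\frac{K}{2}\int_\Omega|\nabla w-\nabla w_0|^2\,dx$ for all $u\in U$, with equality at $u_0$, and the same estimate applied to an arbitrary $v^*\in A^*$ also closes the $\sup_{v^*\in A^*}J^*(v^*,z_0^*)$ link in the chain. Replace your convexification step with this weak-duality estimate and the argument goes through; as written, the central step of your proposal rests on a convexity claim that is false.
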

\begin{proof}
From the hypotheses,
$$\delta J_3^*(v_0^*,z_0^*,u_0)=\mathbf{0}.$$
From the variation in $\tilde{M}$ we have
$$-\overline{h}_{\alpha\beta\lambda\mu}((\tilde{M}_0)_{\lambda\mu}+z_0^* \delta_{\lambda \mu})+(w_0)_{,\alpha\beta}=0, \text{ in } \Omega.$$

Thus,
$$(\tilde{M}_0)_{\alpha\beta}=h_{\alpha\beta\lambda\mu}(w_0)_{\lambda \mu}-z_0^*\delta_{\alpha\beta}.$$

From the variation in $Q$ we obtain

$$-\overline{N^K_{\alpha\beta}} (Q_0)_\beta+(w_0)_{,\alpha}=0, \text{ in } \Omega$$ so that
$$(Q_0)_\alpha =(N_0)_{\alpha\beta}(w_0)_\beta+K (w_0)_\alpha.$$

From the variation in $N$ we get
$$(\overline{N_0^K})_{\alpha\rho}(Q_0)_\rho(\overline{N_0^K})_{\alpha\mu}(Q_0)_\mu+\frac{1}{2} ((u_0)_{\alpha,\beta}+(u_0)_{\beta,\alpha})-\overline{H}_{\alpha\beta\lambda\mu}(N_0)_{\lambda\mu}=0, \text{ in } \Omega$$ and hence
$$(N_0)_{\alpha\beta}=H_{\alpha\beta\lambda \mu}\left(\frac{1}{2} ((u_0)_{\lambda,\mu}+(u_0)_{\lambda,\mu})+\frac{1}{2}(w_0)_\lambda(w_0)_\mu\right).$$
Considering the variation in $z^*$ we obtain
$$-\frac{\nabla^2 z^*_0}{K}-(H_{\alpha\beta\lambda\mu}((\tilde{M}_0)_{\lambda\mu}+z_0^* \delta_{\lambda\mu}))\delta_{\alpha\beta}=0, \text{ in } \Omega$$ so that
$$-\frac{\nabla^2 z^*_0}{K}-((w_0)_{11}+(w_0)_{22})=0,$$ that is, $$-\frac{\nabla^2 z^*_0}{K}-\nabla^2w_0=0,$$
and thus
$$z_0^*=-Kw_0, \text{ in } \Omega.$$

Finally, from the variation in $u$ we obtain

$$(\tilde{M}_0)_{\alpha\beta, \alpha\beta}-(Q_0)_{\alpha, \alpha}-P =0,$$
and
$$(N_0)_{\alpha\beta,\beta}+P_\alpha=0, \text{ in } \Omega,$$ so that $$v_0^* \in A^*.$$

From these last results and the Legendre transform properties we get
$$G_1^*(\tilde{M}_0,z_0^*)= \langle (w_0)_{\alpha\beta}, (\tilde{M}_0)_{\alpha\beta}+z^*_0 \delta_{\alpha\beta} \rangle_{L^2}-G_1(\{(w_0)_{\alpha\beta}\}),$$
$$G_2^*(N_0,Q_0)=\langle (u_0)_{\alpha,\beta},(N_0)_{\alpha\beta}\rangle_{L^2}+\langle (w_0)_{,\alpha},(Q_0)_\alpha \rangle_{L^2}-G_2(\{(u_0)_{\alpha,\beta},(w_0)_{,\alpha}\})$$
and
$$F^*(z^*_0)= -\langle \nabla w_0,\nabla z^*_0 \rangle_{L^2}-F(u_0).$$
From such results we may infer that
\begin{eqnarray}
J^*(v_0^*,z_0^*)&=&-G_1^*(\tilde{M}_0,z_0^*)-G_2^*(N_0,Q_0)+F^*(z^*_0)
\nonumber \\ &=& -\langle (w_0)_{,\alpha\beta}, (\tilde{M}_0)_{\alpha\beta}+z^*_0 \delta_{\alpha\beta} \rangle_{L^2}+G_1(\{(w_0)_{,\alpha\beta}\})
\nonumber \\ &&-\langle (u_0)_{\alpha,\beta},(N_0)_{\alpha\beta}\rangle_{L^2}-\langle (w_0)_{,\alpha},(Q_0)_\alpha \rangle_{L^2}+G_2(\{(u_0)_{\alpha,\beta},(w_0)_{,\alpha}\}) \nonumber \\ &&-\langle \nabla w_0,\nabla z^*_0 \rangle_{L^2}-F(u_0)
\nonumber \\ &=& -\langle w_0,P\rangle_{L^2}-\langle (u_0)_\alpha,P_\alpha \rangle_{L^2} \nonumber \\ &&+
\frac{1}{2}\int_\Omega h_{\alpha\beta\lambda\mu} \kappa_{\alpha\beta}(u_0) \kappa_{\lambda \mu}(u_0)\;dx
\nonumber \\ &&+\frac{1}{2}\int_\Omega H_{\alpha\beta\lambda\mu} \gamma_{\alpha\beta}(u_0) \gamma_{\lambda \mu}(u_0)\;dx
\nonumber \\ &=& J(u_0).
\end{eqnarray}

Also $$(\overline{h}_{11\lambda \mu}((\tilde{M}_0)_{\lambda\mu}+z^*_0\delta_{\lambda \mu}))_{,22}-(\overline{h}_{22\lambda \mu}((\tilde{M}_0)_{\lambda\mu}+z^*_0\delta_{\lambda \mu})_{,11}=(w_0)_{1122}-(w_0)_{2211}=0,$$ in distributional sense,
so that
$$J^*(v_0^*,z_0^*)=J_1^*(v_0^*,z_0^*)=J(u_0).$$

Now observe that
\begin{eqnarray}
J^*(v_0^*,z_0^*)&=&-G_1^*(\tilde{M}_0,z_0^*)-G_2^*(N_0,Q_0)+F^*(z^*_0)
\nonumber \\ &\leq& -\langle w_{,\alpha\beta}, (\tilde{M}_0)_{\alpha\beta}+z^*_0 \delta_{\alpha\beta} \rangle_{L^2}+G_1(\{w_{,\alpha\beta}\})
\nonumber \\ &&-\langle u_{\alpha,\beta},(N_0)_{\alpha\beta}\rangle_{L^2}+\langle w_{,\alpha},(Q_0)_\alpha \rangle_{L^2}+G_2(\{u_{\alpha,\beta},w_{,\alpha}\}) \nonumber \\ &&+F^*(z_0^*)\nonumber \\ &=&
\nonumber \\ &&
-\langle w,P\rangle_{L^2}-\langle u_\alpha,P_\alpha \rangle_{L^2} \nonumber \\ &&+G_1(\{w_{,\alpha\beta}\})+G_2(\{u_{\alpha,\beta},w_{,\alpha}\}) -F(u)+F(u)+\langle \nabla w, \nabla z_0^* \rangle_{L^2}+F^*(z_0^*) \nonumber \\ &=&
-\langle w,P\rangle_{L^2}-\langle u_\alpha,P_\alpha \rangle_{L^2} \nonumber \\ &&+G_1(\{w_{,\alpha\beta}\})+G_2(\{u_{\alpha,\beta},w_{,\alpha}\}) -F(u) \nonumber \\ &&+\frac{K}{2}\int_\Omega |\nabla w|^2\;dx-K\langle \nabla w, \nabla w_0 \rangle_{L^2}+\frac{K}{2}\int_\Omega |\nabla w_0|^2\;dx \nonumber \\ &=&
J(u)+\frac{K}{2}\int_\Omega |\nabla w-\nabla w_0|^2\;dx, \; \forall u \in U.
\end{eqnarray}
Therefore we have got
\begin{equation}\label{us356} J(u_0)=J^*(v_0^*,z_0^*)\leq \inf_{u \in U} \left\{J(u)+\frac{K}{2}\int_\Omega |\nabla w-\nabla w_0|^2\;dx\right\},\end{equation}
so that, from this we may infer that
\begin{eqnarray}
J(u_0)&=& \inf_{ u \in U}\left\{ J(u)+\frac{K}{2}\int_\Omega |\nabla w-\nabla w_0|^2\;dx\right\} \nonumber \\ &=&
J^*(v_0^*,z_0^*)  \nonumber \\ &=& \sup_{v^* \in A^*} J^*(v^*,z_0^*) \nonumber \\ &=& J_1^*(v_0^*,z_0^*).
\end{eqnarray}
The proof is complete.
\end{proof}

Our final result in this section refers to a concave dual variational formulation for the plate model in question.
\begin{thm} Denoting $v_0^*=(N_0,Q_0,\tilde{M}_0),$ suppose $(v_0^*,z_0^*,u_0) \in B^* \times Y^*\times U$ is such that
$$\delta J_3^*(v_0^*,z_0^*,u_0)=\mathbf{0}.$$
Assume $K>0$ is such that $$J_2^*(z^*)>0, \; \forall z^* \in Y^* \text{ such that } z^* \neq \mathbf{0}.$$
Under such hypotheses we have, $$\delta J(u_0)=\mathbf{0},$$ $$v_0^*=(N_0,Q_0, \tilde{M}_0) \in A^*$$ and
\begin{eqnarray} J(u_0)&=& \inf_{u \in U} J(u) \nonumber \\ &=& \sup_{v^* \in A^*}\left\{\inf_{z^* \in Y^*}J_1^*(v^*,z^*) \right\}
\nonumber \\ &=& J_1^*(v_0^*,z_0^*).
\end{eqnarray}
\end{thm}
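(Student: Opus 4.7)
The hypothesis $\delta J_3^*(v_0^*,z_0^*,u_0)=\mathbf{0}$ is identical to the one in the preceding theorem, so we obtain at once $\delta J(u_0)=\mathbf{0}$, $v_0^*\in A^*$, the equalities
\[J(u_0)=J^*(v_0^*,z_0^*)=J_1^*(v_0^*,z_0^*)=\sup_{v^*\in A^*}J^*(v^*,z_0^*),\]
and the penalized bound $J(u_0)\leq J(u)+\tfrac{K}{2}\|\nabla w-\nabla w_0\|_2^2$ for all $u\in U$. What remains is to identify the inner infimum in $z^*$, upgrade the $v^*$-supremum from $J^*$ to $J_1^*$, and remove the penalty to get global minimality.

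The new hypothesis $J_2^*(z^*)>0$ for $z^*\neq\mathbf{0}$ is the cornerstone of the first two items. For each fixed $v^*$, the map $z^*\mapsto J_1^*(v^*,z^*)$ is a non-homogeneous quadratic whose purely quadratic part in $z^*$ equals $J_2^*(z^*)$ (the remaining $z^*$-contributions are constants and linear cross terms with $\tilde{M}$), so the hypothesis forces strict convexity and coercivity of $J_1^*(v^*,\cdot)$ on $Y^*$. Moreover the $z^*$-derivative of the extra term $\tfrac12\int_\Omega C_0T(\tilde{M},z^*)^2\,dx$ is proportional to $T$, and $T(\tilde{M}_0,z_0^*)=(w_0)_{,1122}-(w_0)_{,2211}=0$ as already verified in the preceding proof, so $\delta_{z^*}J_1^*(v_0^*,z_0^*)=\delta_{z^*}J^*(v_0^*,z_0^*)=\mathbf{0}$. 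Strict convexity then pins $z_0^*$ as the unique minimizer and gives $\inf_{z^*\in Y^*}J_1^*(v_0^*,z^*)=J_1^*(v_0^*,z_0^*)=J(u_0)$. The value and first-order $v^*$-variation of the extra term likewise vanish at $(v_0^*,z_0^*)$, and together with the concavity of $J_1^*$ in $v^*$ assumed in this section this upgrades $\sup_{v^*\in A^*}J^*(v^*,z_0^*)=J^*(v_0^*,z_0^*)$ (from the preceding theorem) into $\sup_{v^*\in A^*}J_1^*(v^*,z_0^*)=J_1^*(v_0^*,z_0^*)$. Consequently $(v_0^*,z_0^*)$ is a saddle point of $J_1^*$ on $A^*\times Y^*$, and the standard saddle-point identity delivers
\[\sup_{v^*\in A^*}\inf_{z^*\in Y^*}J_1^*(v^*,z^*)=J_1^*(v_0^*,z_0^*)=J(u_0).\]

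The direction $J(u_0)\geq\inf_{u\in U}J(u)$ is trivial because $u_0\in U$, so only $J(u_0)\leq\inf_{u\in U}J(u)$ remains. I would derive this through weak duality. For every $u\in U$, $v^*\in A^*$ and $z^*\in Y^*$, a Fenchel--Young plus integration-by-parts computation identical to the one in the preceding proof (choosing Fenchel test points $v_3=\{w_{,\alpha\beta}\}$, $v_1=\{u_{\alpha,\beta}\}$, $v_2=\{w_{,\alpha}\}$ and using $v^*\in A^*$ to shift derivatives onto the loads $P_\alpha,P$) yields
\[J^*(v^*,z^*)\leq J(u)+\tfrac{K}{2}\|\nabla w+\nabla z^*/K\|_2^2,\]
so the choice $z^*=-Kw\in Y^*$ (admissible by the clamped data on $w$) kills the penalty and gives $J^*(v^*,-Kw)\leq J(u)$. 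The principal obstacle is that $J_1^*-J^*=\tfrac12\int_\Omega C_0T^2\geq 0$ is not a priori zero for a generic $v^*\in A^*$, so the above bound does not transfer directly to $J_1^*$. I would surmount it by invoking the minimax identity just proved to swap $\sup_{v^*}$ and $\inf_{z^*}$, and then analyzing $\sup_{v^*\in A^*}J_1^*(v^*,-Kw)$: introducing $u$ as the Lagrange multiplier for the constraint $v^*\in A^*$, the first-order conditions at the constrained maximum force the compatibility $\tilde{M}^*_{\alpha\beta}+z^*\delta_{\alpha\beta}=h_{\alpha\beta\lambda\mu}w_{,\lambda\mu}$ at the maximizer, whereupon $T(\tilde{M}^*,-Kw)=w_{,1122}-w_{,2211}=0$ and the extra term vanishes, collapsing $\sup_{v^*\in A^*}J_1^*(v^*,-Kw)$ into $\sup_{v^*\in A^*}J^*(v^*,-Kw)\leq J(u)$. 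Taking infimum over $u\in U$ then gives $J(u_0)\leq\inf_{u\in U}J(u)$, closing the full equality chain asserted in the theorem.
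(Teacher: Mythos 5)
Your proposal reconstructs the paper's architecture almost exactly: you inherit $\delta J(u_0)=\mathbf{0}$, $v_0^*\in A^*$ and $J(u_0)=J^*(v_0^*,z_0^*)=J_1^*(v_0^*,z_0^*)$ from the preceding theorem, correctly identify $J_2^*$ as the homogeneous quadratic part of $z^*\mapsto J_1^*(v^*,z^*)$ so that the hypothesis $J_2^*>0$ yields the convexity needed for the saddle-point/minimax identity, and close the argument by weak duality through the Lagrangian in which $u$ multiplies the constraints defining $A^*$ --- this is precisely the paper's functional $J_4^*$, and your bound $J^*(v^*,z^*)\leq J(u)+\frac{K}{2}\|\nabla w+\nabla z^*/K\|_2^2$ with the choice $z^*=-Kw$ is the paper's estimate in disguise.

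The one step with a genuine hole is your claim that ``the first-order conditions at the constrained maximum force the compatibility $\tilde{M}^*_{\alpha\beta}+z^*\delta_{\alpha\beta}=h_{\alpha\beta\lambda\mu}w_{,\lambda\mu}$, whereupon $T=0$ and the extra term vanishes.'' As stated this is circular: the stationarity condition in $\tilde{M}_{\alpha\beta}$ is
\begin{equation*}
-\overline{h}_{\alpha\beta\lambda\mu}(\tilde{M}_{\lambda\mu}+z^*\delta_{\lambda\mu})+w_{,\alpha\beta}+\{\overline{h}_{11\alpha\beta}\,C_0[L]\}_{,22}-\{\overline{h}_{22\alpha\beta}\,C_0[L]\}_{,11}=0,
\end{equation*}
where $L$ is the paper's name for your $T$, so the compatibility you invoke holds only if the $C_0[L]$ contributions already vanish, i.e.\ only if $L=0$ --- which is exactly what you are trying to establish. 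The missing idea is the paper's cancellation: take $\partial_{22}$ of the $(\alpha\beta)=(11)$ equation minus $\partial_{11}$ of the $(22)$ equation; the $w$ contributions cancel since $w_{,1122}=w_{,2211}$, the first group of terms reproduces $-L$, and the remaining terms equal $(\overline{h}_{2222}D_{1111}-2\overline{h}_{1122}D_{1122}+\overline{h}_{1111}D_{2222})[C_0[L]]=(1-\varepsilon_3)L$ by the definition of $C_0$, giving $-\varepsilon_3 L=0$ and hence $L=0$; only then does the compatibility, and with it the collapse of $\sup J_1^*$ to $\sup J^*\leq J(u)$, follow. You should also record, as the paper does, that the final identification of $\inf_{z^*}\sup_{v^*\in B^*}J_4^*(v^*,z^*,u)$ with $J(u)$ requires $\{N_{\alpha\beta}(u)+K\delta_{\alpha\beta}\}$ to be positive definite, so that the maximizing $(N,Q)$ actually lies in $B^*$.
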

\begin{proof}
The proof that $\delta J(u_0)=\mathbf{0}$ and $v_0^* \in A^*$ may be done exactly as in the proof of the last theorem.
First recall that we have assumed  $J_2^*(z^*)>0$ for all $z^* \in Y^*$ such that $z^* \neq \mathbf{0}.$

Therefore, denoting $J_4^* : B^* \times Y^* \times U \rightarrow \mathbb{R}$ by
\begin{eqnarray}J_4^*(v^*,z^*,u)&=&J_1^*(v^*,z^*)
\nonumber \\ &&+\langle w, \tilde{M}_{\alpha\beta, \alpha\beta}-Q_{\alpha, \alpha}-P \rangle_{L^2}
\nonumber \\ &&-\langle u_\alpha, N_{\alpha\beta,\beta}+P_\alpha \rangle_{L^2},
\end{eqnarray}
 since $J_1^*$ is concave in $v^*$ and convex in $z^*$, we have got
\begin{eqnarray}
J(u_0)&=&J^*_1(v_0^*,z_0^*)\nonumber \\ &=& \sup_{v^* \in A^*}\left\{ \inf_{z^* \in Y^*} J_1^*(v^*,z^*)\right\} \nonumber \\ &\leq&
\sup_{v^* \in A^*}J_1^*(v^*,z^*)
\nonumber \\ &\leq& \sup_{v^* \in B^*}J_4^*(v^*,z^*,u), \; \forall u \in U,\; z^* \in Y^*.\end{eqnarray}

From this we get
\begin{eqnarray}
J(u_0)&=&J^*_1(v_0^*,z_0^*)
\nonumber \\ &\leq& \inf_{z^* \in Y^*}\left\{\sup_{v^* \in B^*}J_4^*(v^*,z^*,u)\right\}, \; \forall u \in U.\end{eqnarray}

Now observe that denoting $$L(v^*,z^*)=(\overline{h}_{11\lambda \mu}(\tilde{M}_{\lambda \mu}+z^*\delta_{\lambda \mu}))_{,22}+
-(\overline{h}_{22\lambda \mu}(\tilde{M}_{\lambda \mu}+z^*\delta_{\lambda \mu}))_{,11},$$ from the variations in  $\tilde{M}_{11}$ and $\tilde{M}_{22}$, since the operator $C_0$ is self-adjoint, we may obtain,
\begin{eqnarray}&&-\overline{h}_{11\lambda \mu}(\tilde{M}_{\lambda \mu}+z^*\delta_{\lambda \mu})+w_{,11}
\nonumber \\ && +\{\overline{h}_{1111}[C_0[ L(v^*,z^*)]]\}_{,22}-\{\overline{h}_{2211}[C_0[ L(v^*,z^*)]]\}_{,11}=0,\end{eqnarray}
and
\begin{eqnarray}&&-\overline{h}_{22\lambda \mu}(\tilde{M}_{\lambda \mu}+z^*\delta_{\lambda \mu})+w_{,22}
\nonumber \\ && +\{\overline{h}_{1122}[C_0[ L(v^*,z^*)]]\}_{,22}-\{\overline{h}_{2222}[C_0[ L(v^*,z^*)]]\}_{,11}=0.\end{eqnarray}

From this, recalling that $$L(v^*,z^*)=(\overline{h}_{11\lambda \mu}(\tilde{M}_{\lambda \mu}+z^*\delta_{\lambda \mu}))_{,22}-(\overline{h}_{22\lambda \mu}(\tilde{M}_{\lambda \mu}+z^*\delta_{\lambda \mu}))_{,11}$$ we get

\begin{eqnarray}&&-(\overline{h}_{11\lambda \mu}(\tilde{M}_{\lambda \mu}+z^*\delta_{\lambda \mu}))_{,22}+
(\overline{h}_{22\lambda \mu}(\tilde{M}_{\lambda \mu}+z^*\delta_{\lambda \mu}))_{,11} \nonumber \\ &&
+\left\{\overline{h}_{1111}[C_0[ L(v^*,z^*)]]_{,22}-\overline{h}_{2211}[C_0[ L(v^*,z^*)]]_{,11}\right\}_{,22} \nonumber \\ &&
-\left\{\overline{h}_{1122}[C_0[ L(v^*,z^*)]]_{,22}-\overline{h}_{2222}[C_0[ L(v^*,z^*)]]_{,11}\right\}_{,11} \nonumber \\ &=&
-w_{1122}+w_{2211}=0,\end{eqnarray}
so that through the equation related to the  variation in $\tilde{M}$ satisfied we have obtained  $$L(v^*,z^*)=0,\; \forall v^* \in B^*, z^* \in Y^*.$$

From such results we may infer that if $$\{N_{\alpha\beta}(u) +K\delta_{\alpha\beta}\}$$ is positive definite  $\text{in } \Omega$, then $$\inf_{z^* \in Y^*}\left\{\sup_{v^* \in B^*}J_4^*(v^*,z^*,u)\right\}=J(u),$$ where $$N_{\alpha\beta}(u)=H_{\alpha\beta\lambda\mu} \gamma_{\lambda\mu}(u).$$

Therefore,
since $J_1^*$ is concave in $v^*$ and convex in $z^*$, from these last results and from the min-max theorem we have
\begin{eqnarray}
J(u_0)&=&J^*_1(v_0^*,z_0^*) \nonumber \\ &=& \sup_{v^* \in A^*}\left\{ \inf_{z^* \in Y^*} J_1^*(v^*,z^*)\right\} \nonumber \\ &\leq&  \inf_{z^* \in Y^*}\left\{\sup_{v^* \in B^*}J_4^*(v^*,z^*,u)\right\} \nonumber \\ &\leq& J(u), \; \forall u \in U.\end{eqnarray}

Thus,
\begin{eqnarray} J(u_0)&=& \inf_{u \in U} J(u) \nonumber \\ &=& \sup_{v^* \in A^*}\left\{\inf_{z^* \in Y^*}J_1^*(v^*,z^*) \right\}
\nonumber \\ &=& J_1^*(v_0^*,z_0^*).
\end{eqnarray}

The proof is complete.

\end{proof}

\section{An auxiliary theoretical result in analysis}

In this section we state and prove some theoretical results in analysis which will be used in the
subsequent sections.

\begin{thm} Let $\Omega \subset \mathbb{R}^3$ be an open, bounded and connected set with a regular (Lipschitzian)
boundary denoted by $\partial \Omega.$

Assume $\{u_n\} \subset W^{1,4}(\Omega)$ be such that $$\|u_n\|_{1,4} \leq K, \; \forall n \in \mathbb{N},$$
for some $K>0.$

Under such hypotheses there exists $u_0 \in W^{1,4}(\Omega) \cap C(\overline{\Omega})$ such that, up to a not relabeled subsequence,
$$u_n \rightharpoonup u_0, \text{ weakly in } W^{1,4}(\Omega),$$
$$u_n \rightarrow u_0 \text{ uniformly in } \overline{\Omega}$$
and
$$u_n \rightarrow u_0, \text{ strongly in } W^{1,3}(\Omega).$$
\end{thm}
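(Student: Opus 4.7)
The plan is to extract subsequences in three stages, exploiting that the Sobolev exponent $p=4$ strictly exceeds the dimension $n=3$.

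First I would obtain the weakly convergent subsequence in $W^{1,4}(\Omega)$. Since $W^{1,4}(\Omega)$ is a reflexive Banach space and $\{u_n\}$ is norm-bounded, standard reflexivity yields a subsequence, not relabeled, and a limit $u_0 \in W^{1,4}(\Omega)$ with $u_n \rightharpoonup u_0$; weak lower semicontinuity of the norm gives $\|u_0\|_{1,4} \leq K$.

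Second, for the uniform convergence, since $\Omega \subset \mathbb{R}^3$ has Lipschitz boundary and $p=4>3=n$, Morrey's embedding theorem furnishes a continuous inclusion $W^{1,4}(\Omega) \hookrightarrow C^{0,1/4}(\overline{\Omega})$. Hence $\{u_n\}$ is uniformly bounded and equi-H\"older continuous on the compact set $\overline{\Omega}$, and Arzel\`a--Ascoli supplies a further subsequence converging uniformly to some $\tilde u_0 \in C(\overline{\Omega})$. Identifying the two limits (for instance via the strong $L^4$ convergence coming from the Rellich--Kondrachov theorem, since $4$ is subcritical in dimension $3$) forces $\tilde u_0 = u_0$, so $u_0 \in W^{1,4}(\Omega) \cap C(\overline{\Omega})$ and $u_n \to u_0$ uniformly on $\overline{\Omega}$.

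Third, for the strong $W^{1,3}$ convergence, the strong convergence $u_n \to u_0$ in $L^3(\Omega)$ is immediate from uniform convergence on the bounded domain, so only the strong $L^3$ convergence of $\nabla u_n$ to $\nabla u_0$ remains. This is the step I expect to be the main obstacle, since all that is directly available is $\nabla u_n \rightharpoonup \nabla u_0$ weakly in $L^4(\Omega;\mathbb{R}^3)$ with a uniform $L^4$ bound, and weak $L^4$ convergence does not by itself force strong $L^3$ convergence of the gradients. To close the gap, the strategy I would attempt is to extract a further subsequence along which $\nabla u_n$ converges almost everywhere on $\Omega$ and then invoke Vitali's convergence theorem in $L^3$; equi-integrability of $|\nabla u_n|^3$, as required by Vitali, is supplied for free by the uniform $L^4$ bound through a de la Vall\'ee-Poussin / Dunford--Pettis argument, while the almost-everywhere extraction of the gradients is the delicate point that deserves the most careful scrutiny and may require an additional structural hypothesis or an interpolation between the uniform convergence of $u_n$ and the weak $L^4$ control of $\nabla u_n$.
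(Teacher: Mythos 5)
Your treatment of the first two conclusions is correct and essentially the paper's: reflexivity of $W^{1,4}(\Omega)$ gives the weakly convergent subsequence, and since $p=4>3=n$ the embedding $W^{1,4}(\Omega)\hookrightarrow C(\overline{\Omega})$ is compact (Morrey plus Arzel\`a--Ascoli in your version, Rellich--Kondrachov in the paper's), which yields the uniform convergence and identifies the limit. The genuine problem is the third conclusion, and you have located it exactly: everything reduces to extracting a subsequence along which $\nabla u_n\rightarrow\nabla u_0$ almost everywhere, after which Vitali together with the equi-integrability of $|\nabla u_n-\nabla u_0|^3$ coming from the uniform $L^4$ bound would indeed finish. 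But that step cannot be carried out from the stated hypotheses, and no alternative argument will save it, because the theorem as stated is false. Take $\Omega=(0,1)^3$ and $u_n(x)=n^{-1}\sin(nx_1)$. Then $\|u_n\|_{1,4}$ is bounded, $u_n\rightarrow 0$ uniformly and weakly in $W^{1,4}(\Omega)$, yet $\partial u_n/\partial x_1=\cos(nx_1)$ satisfies $\|\cos(nx_1)\|_{L^3(\Omega)}^3\rightarrow 4/(3\pi)>0$ and no subsequence of it converges a.e.\ (its weak limit is $0$ while the weak limit of its square is $1/2$). Since any strong $W^{1,3}$ limit of a subsequence would have to coincide with the weak limit $0$, no subsequence converges strongly in $W^{1,3}(\Omega)$. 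A uniform $L^4$ bound on the gradients controls concentration but not oscillation, and oscillation is precisely what defeats both the a.e.\ extraction and the strong $L^3$ convergence; your instinct that ``an additional structural hypothesis'' is needed is right, and without one the gap is unfillable.

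For comparison, the paper's own proof attempts exactly the route you flag as delicate: it represents $\partial u_n/\partial x_j$ at a Lebesgue point as $\lim_{r\rightarrow 0^+}$ of averages over $B_r(x)$, converts the averages to boundary integrals of $(u_n-u_0)\nu_j$ by Gauss--Green, and then uses the uniform convergence of $u_n$. The decisive step there is the asserted identity $\limsup_{n}\limsup_{r\rightarrow 0^+}(\cdots)=\limsup_{r\rightarrow 0^+}\limsup_{n}(\cdots)$, which is an unjustified interchange of iterated limits; on the example above the inner limit in $r$ returns $\cos(nx_1)$ exactly for every $n$, so the left side equals $1$ a.e.\ while the right side equals $0$ by Riemann--Lebesgue. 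So the paper does not close the gap either. A correct statement would either weaken the third conclusion (e.g.\ to strong convergence of $u_n$ in $L^q(\Omega)$ for all $q<\infty$ together with weak convergence of the gradients) or add a hypothesis excluding gradient oscillation, such as a uniform second-derivative bound or the a.e.\ convergence of $\nabla u_n$ assumed outright.
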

\begin{proof} Since $W^{1,4}(\Omega)$ is reflexive, from the Kakutani and Sobolev Imbedding  theorems, up to a not relabeled
there exists $u_0 \in W^{1,4}(\Omega)$ such that
$$u_n \rightharpoonup u_0, \text{ weakly in } W^{1,4}(\Omega),$$
and
$$u_n \rightarrow u_0, \text{ strongly in } L^4(\Omega).$$

From the Rellich-Kondrachov Theorem, since for $m=1$, $p=4$ and $n=3$, we have $mp>n$, the following imbedding
is compact,
$$W^{1,4}(\Omega) \hookrightarrow C(\overline{\Omega}).$$

Thus, $$\{u_n\} \subset C(\overline{\Omega}),$$ and again up to a not relabeled subsequence,
$$u_n \rightarrow u_0 \text{ uniformly in } \overline{\Omega},$$
and also $$u_0 \in C(\overline{\Omega}),$$ so that
$$u_0 \in W^{1,4}(\Omega) \cap C(\overline{\Omega}).$$

Let $\varepsilon>0$. Hence, there exists $n_0 \in \mathbb{N}$ such that if $n>n_0$, then
$$|u_n(x)-u_0(x)|< \varepsilon, \text{ for almost all } x \in \Omega.$$

Let $$\varphi \in C^1_c(\Omega).$$ Choose $j \in \{1,2,3\}.$

Therefore, we may obtain

\begin{eqnarray}
\left|\left\langle \frac{\partial u_n }{\partial x_j}-\frac{\partial u_0 }{\partial x_j}, \varphi \right\rangle_{L^2}\right|
&=& \left|\left\langle u_n-u_0, \frac{\partial \varphi }{\partial x_j} \right\rangle_{L^2}\right|
\nonumber \\ &\leq&  \left\langle |u_n-u_0|, \left|\frac{\partial \varphi }{\partial x_j}\right| \right\rangle_{L^2}
\nonumber \\ &\leq& \varepsilon \left\|\frac{\partial \varphi }{\partial x_j}\right\|_1,\; \forall n >n_0.\end{eqnarray}

From this we may infer that
$$\lim_{n \rightarrow \infty}\left\langle \frac{\partial u_n }{\partial x_j}-\frac{\partial u_0 }{\partial x_j}, \varphi \right\rangle_{L^2}=0, \forall \varphi \in C^1_c(\Omega).$$

At this point we claim that
$$\lim_{n \rightarrow \infty}\left\langle \frac{\partial u_n }{\partial x_j}-\frac{\partial u_0 }{\partial x_j}, \varphi \right\rangle_{L^2}=0, \forall \varphi \in C_c(\Omega).$$

To prove such a claim, let $\varphi \in C_c(\Omega).$

Let a new $\varepsilon>0$ be given.

Hence, for each $r>0$ there exists $n_r \in \mathbb{N}$ such that if $n>n_r$, then $$\|u_n-u_0\|_\infty < \varepsilon r.$$

Observe that by density, we may obtain $\varphi_1 \in C^1_c(\Omega)$ such that
$$\|\varphi-\varphi_1\|_\infty< \varepsilon.$$

Hence,
\begin{eqnarray}
&&\left|\left\langle \frac{\partial u_n }{\partial x_j}-\frac{\partial u_0 }{\partial x_j}, \varphi \right\rangle_{L^2}\right|
\nonumber \\ &\leq& \left\langle \left|\frac{\partial u_n }{\partial x_j}-\frac{\partial u_0 }{\partial x_j}\right|, |\varphi -\varphi_1|\right\rangle_{L^2} \nonumber \\ &&+  \left|\left\langle\frac{\partial u_n }{\partial x_j}-\frac{\partial u_0 }{\partial x_j}, \varphi_1 \right\rangle_{L^2}\right| \nonumber \\ &\leq&
 \int_\Omega \left|\frac{\partial u_n }{\partial x_j}-\frac{\partial u_0 }{\partial x_j}\right|\;dx\|\varphi-\varphi_1\|_\infty \nonumber \\ &&+\left|\left\langle u_n-u_0,\frac{\partial \varphi_1 }{\partial x_j} \right\rangle_{L^2}\right| \nonumber \\ &\leq& 2 K_1 \varepsilon +\varepsilon\left\|\frac{\partial  \varphi_1}{\partial x_j}\right\|_1 \nonumber \\ &=& \left(2K_1+\left\|\frac{\partial  \varphi_1}{\partial x_j}\right\|_1\right)\varepsilon, \forall n>n_1.
 \end{eqnarray}
where $K_1>0$ is such that $$\|u_0\|_1<K_1, \|u_0\|_2<K_1$$ and $$\|u_n\|_1 < K_1,\;\|u_n\|_2 < K_1\; \forall n \in \mathbb{N}.$$

From this we may infer that
\begin{equation}\label{eq29}\lim_{n \rightarrow \infty}\left\langle \frac{\partial u_n }{\partial x_j}-\frac{\partial u_0 }{\partial x_j}, \varphi \right\rangle_{L^2}=0,\; \forall \varphi \in C_c(\Omega)\end{equation} so that
the claim holds.

Since $\Omega$ is bounded, we have $W^{1,4}(\Omega) \subset W^{1,2}(\Omega)$.

From the Gauss-Green Formula for such a latter space, we obtain

\begin{eqnarray}
&&\lim_{n \rightarrow \infty }\left|\frac{\partial u_n(x) }{\partial x_j}-\frac{\partial u_0(x) }{\partial x_j}\right|
\nonumber \\ &=&\lim_{ n \rightarrow \infty }\left( \lim_{r \rightarrow 0^+} \frac{\left| \int_{B_r(x)}\left( \frac{\partial u_n(y) }{\partial x_j}-\frac{\partial u_0(y) }{\partial x_j}\right)\;dy\right|}{m(B_r(x))}\right) \nonumber \\ &\leq&\limsup_{ n \rightarrow \infty }\left( \limsup_{r \rightarrow 0^+} \frac{\left| \int_{B_r(x)}\left( \frac{\partial u_n(y) }{\partial x_j}-\frac{\partial u_0(y) }{\partial x_j}\right)\;dy\right|}{m(B_r(x))}\right) \nonumber \\ &=&\limsup_{r \rightarrow 0^+}\left(\limsup_{ n \rightarrow \infty }  \frac{\left| \int_{B_r(x)}\left( \frac{\partial u_n(y) }{\partial x_j}-\frac{\partial u_0(y) }{\partial x_j}\right)\;dy\right|}{m(B_r(x))}\right) \nonumber \\ &=&
\limsup_{r \rightarrow 0^+}\left(\limsup_{ n \rightarrow \infty }\frac{\left| \int_{\partial B_r(x)} (u_n(y)-u_0(y))\nu_i \;dS(y)\right|}{m(B_r(x))}\right) \nonumber \\ &=&
\limsup_{r \rightarrow 0^+}\left(\limsup_{ n \rightarrow \infty }\frac{\left| (u_n(\tilde{y})-u_0(\tilde{y}))\int_{\partial B_r(x)} \nu_i \;dS(y)\right|}{m(B_r(x))}\right) \nonumber \\ &\leq& \varepsilon \limsup_{r \rightarrow 0^+} \frac{ \int_{\partial B_r(x)} r |\nu_i| \;dS(y)}{m(B_r(x))}
 \nonumber \\ &\leq& K_1 \varepsilon, \text{ for almost all } x \in \Omega,\end{eqnarray}
where $\tilde{y} \in \overline{B_r(x)}$ depends on $r$ and $n$.

Therefore, we may infer that
$$\lim_{n \rightarrow \infty}\frac{\partial u_n(x) }{\partial x_j}=\frac{\partial u_0(x) }{\partial x_j}, \text{ a. e. in }  \Omega.$$

Here we define
$$A_{n, \varepsilon}=\left\{ x \in \Omega \;:\;\left|\frac{\partial u_n(x) }{\partial x_j}-\frac{\partial u_0(x) }{\partial x_j}\right|< \varepsilon\right\}.$$

Define also
$$B_n=\cap_{k=n}^\infty A_{k,\varepsilon}.$$

Observe that for almost all $x \in \Omega$, there exists $n_x \in \mathbb{N}$ such that if $n>n_x$, then
$$\left|\frac{\partial u_n(x) }{\partial x_j}-\frac{\partial u_0(x) }{\partial x_j}\right|< \varepsilon,$$
so that almost all $x \in B_n,\; \forall n>n_x.$

From this $$\Omega = \left(\cup_{n=1}^\infty B_n\right) \cup B_0,$$
where $m(B_0)=0.$

Also $$\cup_{k=1}^n B_k=B_n,$$
so that
$$\lim_{ n \rightarrow \infty } m(B_n)=m(\Omega).$$

Observe that there exists $n_0 \in \mathbb{N}$ such that if $n>n_0$, then
$$\sqrt[4]{m(\Omega \setminus B_n)}< \varepsilon/K^3.$$

Consequently fixing $n >n_0$, from the generalized H\"{o}lder inequality, if $m>n$, we have
\begin{eqnarray}&&\int_\Omega \left|\frac{\partial u_m }{\partial x_j}-\frac{\partial u_0 }{\partial x_j}\right|^3 \;dx
\nonumber \\ &=& \int_{\Omega \setminus B_n} \left|\frac{\partial u_m }{\partial x_j}-\frac{\partial u_0 }{\partial x_j}\right|^3\;dx
\nonumber \\ &&+ \int_{B_n}\left| \frac{\partial u_m }{\partial x_j}-\frac{\partial u_0 }{\partial x_j}\right|^3\;dx \nonumber \\ &\leq& \left\|\frac{\partial u_m }{\partial x_j}-\frac{\partial u_0 }{\partial x_j}\right\|_4^3 \|\chi_{\Omega\setminus B_n}\|_4 +\varepsilon^3 m(\Omega) \nonumber \\ &\leq&
\varepsilon+ \varepsilon^3 m(\Omega).
\end{eqnarray}

Summarizing, we may infer that
$$\int_\Omega \left|\frac{\partial u_m }{\partial x_j}-\frac{\partial u_0 }{\partial x_j}\right|^3 \;dx \rightarrow 0, \text{ as } m \rightarrow \infty, \; \forall j \in \{1,2,3\}.$$
so that
$$u_n \rightarrow u_0, \text{ strongly in } W^{1,3}(\Omega).$$

The proof is complete.

\end{proof}
\section{An existence result for a model in elasticity}
In this section we present an existence result for a non-linear elasticity model.

\begin{thm}
Let $\Omega \subset \mathbb{R}^3$ be an open, bounded and connected set with a regular (Lipschitzian)
boundary denoted by $\partial \Omega.$

Consider the functional $J:U \rightarrow \mathbb{R}$ defined by
\begin{eqnarray}
J(u)&=& \frac{1}{2}\int_\Omega H_{ijkl}\left( \frac{u_{i,j}+u_{j,i}}{2}+\frac{u_{m,i}u_{m,j}}{2}\right)\left( \frac{u_{k,l}+u_{l,k}}{2}+\frac{u_{p,k}u_{p,l}}{2}\right)\;dx \nonumber \\ &&-\langle P_i,u_i \rangle_{L^2},
\end{eqnarray}
where $U=W_0^{1,4}(\Omega;\mathbb{R}^3)$, $P_i \in L^\infty(\Omega),\;\forall i \in \{1,2,3\}.$

Moreover, $\{H_{ijkl}\}$ is a fourth order constant tensor such that
$$H_{ijkl}t_{ij}t_{kl} \geq c_0 t_{ij}t_{ij}, \; \forall \text{ symmetric } t \in \mathbb{R}^{2 \times 2}$$
and
$$H_{ijkl}t_{mi}t_{mj}t_{kp}t_{lp} \geq c_1 \sum_{i,j=1}^3 t_{ij}^4,\; \forall \text{ symmetric } t \in \mathbb{R}^{2\times 2},$$
for some real constants $c_0>0, c_1>0$.

Under such hypotheses, there exists $u_0 \in U$ such that
$$J(u_0)=\min_{u \in U} J(u).$$
\end{thm}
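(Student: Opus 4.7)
The plan is to follow the direct method of the calculus of variations, using the auxiliary theorem of the previous section as the key compactness device to pass to the limit in the non-linear strain.

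First I would establish coercivity in $W_0^{1,4}(\Omega;\mathbb{R}^3)$. Writing the Green--St.\ Venant strain as $\varepsilon_{ij}(u) = a_{ij}(u) + b_{ij}(u)$ with $a_{ij} = (u_{i,j}+u_{j,i})/2$ and $b_{ij} = u_{m,i}u_{m,j}/2$ (both symmetric in $i,j$), the stored energy density expands as $H_{ijkl}\varepsilon_{ij}\varepsilon_{kl} = H(a,a) + 2H(a,b) + H(b,b)$. The first hypothesis makes $(s,t)\mapsto H_{ijkl}s_{ij}t_{kl}$ a positive definite symmetric bilinear form on symmetric tensors, so Cauchy--Schwarz combined with Young's inequality (parameter $\delta=2$) gives $H(\varepsilon,\varepsilon) \geq -H(a,a) + \tfrac{1}{2}H(b,b)$ pointwise. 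The second hypothesis applied pointwise with $t_{ij}=u_{i,j}$ then yields $\int_\Omega H(b,b)\,dx \geq (c_1/8)\|\nabla u\|_{L^4}^4$, while $\int_\Omega H(a,a)\,dx \leq C\|\nabla u\|_{L^2}^2 \leq C'\|\nabla u\|_{L^4}^2$ by boundedness of $\Omega$. Coupled with the Poincar\'e--H\"older bound $|\langle P_i,u_i\rangle_{L^2}| \leq C''\|\nabla u\|_{L^4}$ on the load, this produces an estimate of the form $J(u) \geq \alpha\|\nabla u\|_{L^4}^4 - \beta\|\nabla u\|_{L^4}^2 - \gamma\|\nabla u\|_{L^4}$, so $J$ is bounded below and coercive on $U$.

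Let $\{u_n\}\subset U$ be a minimizing sequence. Coercivity yields a uniform bound $\|u_n\|_{W^{1,4}} \leq K$, and componentwise application of the auxiliary theorem of the preceding section furnishes $u_0 \in W^{1,4}(\Omega;\mathbb{R}^3)\cap C(\overline{\Omega};\mathbb{R}^3)$ and a subsequence, not relabeled, such that $u_n \rightharpoonup u_0$ weakly in $W^{1,4}$, $u_n \to u_0$ uniformly on $\overline{\Omega}$, and $u_n \to u_0$ strongly in $W^{1,3}(\Omega;\mathbb{R}^3)$. Weak continuity of the trace operator from $W^{1,4}$ to $L^4(\partial\Omega)$ places $u_0$ in $W_0^{1,4}(\Omega;\mathbb{R}^3)=U$, and the load functional passes to the limit by uniform (hence $L^2$) convergence.

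The decisive step is lower semicontinuity of the stored energy. Strong convergence $u_{n;m,i}\to u_{0;m,i}$ in $L^3$, combined with uniform boundedness in $L^4$, yields $u_{n;m,i}u_{n;m,j}\to u_{0;m,i}u_{0;m,j}$ strongly in $L^{3/2}$ by H\"older; the uniform $L^2$ bound on these products together with uniqueness of the distributional limit then promotes this to weak convergence in $L^2$. Consequently $\varepsilon_{ij}(u_n)\rightharpoonup \varepsilon_{ij}(u_0)$ weakly in $L^2(\Omega;\mathbb{R}^{3\times 3})$, and since the quadratic functional $\varepsilon\mapsto \int_\Omega H_{ijkl}\varepsilon_{ij}\varepsilon_{kl}\,dx$ is convex and strongly continuous on this space, it is weakly lower semicontinuous. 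Therefore $J(u_0)\leq \liminf_n J(u_n) = \inf_U J$, and $u_0$ attains the minimum. The main obstacle I anticipate is precisely this identification of the weak $L^2$ limit of the quadratic expression $u_{n;m,i}u_{n;m,j}$: a mere weak $W^{1,4}$ bound would be inadequate, and the fine strong $W^{1,3}$ convergence furnished by the auxiliary theorem is the ingredient that closes the gap.
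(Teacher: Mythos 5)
Your proof is correct, and it reaches the same destination by a genuinely different route in the coercivity step. The paper's proof leans on an auxiliary construction: a symmetric, positive definite tensor $\{T_{ij}\}\subset L^\infty$ with $T_{ij,j}+P_i=0$, so that after integrating by parts (using the zero trace) and substituting $\tfrac{1}{2}(u_{i,j}+u_{j,i})=v_{ij}(u)-\tfrac{1}{2}u_{m,i}u_{m,j}$ one gets
$$J(u)=\frac{1}{2}\int_\Omega H_{ijkl}v_{ij}(u)v_{kl}(u)\,dx-\langle T_{ij},v_{ij}(u)\rangle_{L^2}+\left\langle T_{ij},\tfrac{1}{2}u_{m,i}u_{m,j}\right\rangle_{L^2},$$
whence boundedness below follows from the quadratic structure in $v$ plus nonnegativity of the last term, and the same decomposition (convexity in $v$, positivity of $T$) drives the lower semicontinuity with $v_{ij}(u_n)\rightharpoonup v_{ij}(u_0)$ taken only weakly in $L^{3/2}$. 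You instead absorb the load term and the cross term $2H(a,b)$ directly into the quartic term via Cauchy--Schwarz, Young and Poincar\'e, obtaining an explicit coercive lower bound $\alpha\|\nabla u\|_4^4-\beta\|\nabla u\|_4^2-\gamma\|\nabla u\|_4$, and then run the semicontinuity argument with the full strain converging weakly in $L^2$; your upgrade of the strong $L^{3/2}$ limit of $u_{n;m,i}u_{n;m,j}$ to a weak $L^2$ limit via the uniform bound and uniqueness of distributional limits is sound. What your route buys is transparency and explicit constants in the homogeneous Dirichlet setting, where the full Poincar\'e inequality is available; what the paper's $\{T_{ij}\}$ device buys is robustness, since it is exactly the mechanism reused in the subsequent theorem with mixed boundary conditions and boundary tractions, where a direct absorption argument is less immediate. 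Both arguments share the paper's reliance on applying the quartic hypothesis on $\{H_{ijkl}\}$ (stated for symmetric $t$) to the generally non-symmetric gradient $\{u_{i,j}\}$, and both use the auxiliary $W^{1,3}$ strong-convergence theorem as the essential compactness input, which you correctly identify as the step that closes the argument.
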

\begin{proof}
First observe that we may find a positive definite tensor $\{T_{ij}\} \subset L^\infty(\Omega;\mathbb{R}^{2\times 2})$ such that
$$T_{ij,j}+P_i=0, \text{ in } \Omega.$$

Hence, denoting
$$v_{ij}(u)=\frac{u_{i,j}+u_{j,i}}{2}+\frac{u_{m,i}u_{m,j}}{2},$$
we have
$$\frac{u_{i,j}+u_{j,i}}{2}=v_{ij}(u)-\frac{u_{m,i}u_{m,j}}{2},$$
so that
\begin{eqnarray}
J(u)&=&\frac{1}{2}\int_\Omega H_{ijkl}v_{ij}(u)v_{kl}(u)\;dx+\langle T_{ij,j},u_i\rangle_{L^2} \nonumber \\ &=&
\frac{1}{2}\int_\Omega H_{ijkl}v_{ij}(u)v_{kl}(u)\;dx-\left\langle T_{ij},\frac{u_{i,j}+u_{j,i}}{2}\right\rangle_{L^2} \nonumber \\ &=&\frac{1}{2}\int_\Omega H_{ijkl}v_{ij}(u)v_{kl}(u)\;dx-\left\langle T_{ij},v_{ij}(u)-\frac{u_{m,i}\;u_{m,j}}{2}\right\rangle_{L^2} \nonumber \\ &=&\frac{1}{2}\int_\Omega H_{ijkl}v_{ij}(u)v_{kl}(u)\;dx-\left\langle T_{ij},v_{ij}(u)\right\rangle_{L^2}+\left\langle T_{ij},\frac{u_{m,i}\;u_{m,j}}{2}\right\rangle_{L^2},\; \forall u \in U.
\end{eqnarray}
From this and the hypotheses on $\{H_{ijkl}\}$ it is clear that $J$ is bounded below so that there exists $\alpha \in \mathbb{R}$ such that
$$\alpha=\inf_{u \in U} J(u).$$

Let $\{u_n\} \subset U$ be a minimizing sequence for $J$, that is, let such a sequence be such that
$$J(u_n) \rightarrow \alpha, \; \text{ as } n \rightarrow \infty.$$

Also from the hypotheses on $\{H_{ijkl}\}$ and the Poincar\'{e} inequality, we have that there exists $K>0$ such that
$$ \|u_n\|_{1,4} \leq K,\; \forall n \in \mathbb{N}.$$

From the auxiliary result in the last section, there exists $u_0 \in C^0(\overline{\Omega};\mathbb{R}^3) \cap W^{1,4}(\Omega;\mathbb{R}^3)$ such that, up to a not relabeled subsequence,
$$u_n \rightarrow u_0, \text{ strongly in } W^{1,3}(\Omega:\mathbb{R}^3).$$

From such a latter result, up to a not relabeled subsequence, we may obtain
$$\frac{(u_n)_{i,j}+(u_n)_{j,i}}{2}+\frac{(u_n)_{m,i}(u_n)_{m,j}}{2} \rightharpoonup \frac{(u_0)_{i,j}+(u_0)_{j,i}}{2}+\frac{(u_0)_{m,i}(u_0)_{m,j}}{2}, \text{ weakly in } L^{3/2}(\Omega).$$

Since $L^{3/2}(\Omega)$ is reflexive, from the convexity of $J$ in $v_{ij}(u)$ and since $\{T_{ij}\}$ is positive definite, we have that
$$\alpha=\liminf_{n \rightarrow \infty}J(u_n) \geq J(u_0),$$
so that
$$J(u_0)=\min_{u \in U} J(u).$$

The proof is complete.

\end{proof}

\section{Another existence result for a model in elasticity}
In this section we present another existence result  for a similar (to the previous one)  non-linear elasticity model.

\begin{thm}
Let $\Omega \subset \mathbb{R}^3$ be an open, bounded and connected set with a regular (Lipschitzian)
boundary denoted by $\partial \Omega.$

Consider the functional $J:U \rightarrow \mathbb{R}$ defined by
\begin{eqnarray}
J(u)&=& \frac{1}{2}\int_\Omega H_{ijkl}\left( \frac{u_{i,j}+u_{j,i}}{2}+\frac{u_{m,i}u_{m,j}}{2}\right)\left( \frac{u_{k,l}+u_{l,k}}{2}+\frac{u_{p,k}u_{p,l}}{2}\right)\;dx \nonumber \\ &&-\langle P_i,u_i \rangle_{L^2}-\langle P^t_i,u_i \rangle_{L^2(\Gamma_t)},
\end{eqnarray}
where $$\partial \Omega =\Gamma=\Gamma_0 \cup \Gamma_t,$$ $$\Gamma_0 \cap \Gamma_t=\emptyset,$$
$m_\Gamma(\Gamma_0)>0,$ $m_\Gamma(\Gamma_t)>0,$ $P_i \in L^\infty(\Omega)\cap W^{1,2}(\Omega),\; P^t_i \in L^\infty(\Gamma_t),\;
\forall i \in \{1,2,3\}.$

Moreover $$U=\{u \in W^{1,4}(\Omega;\mathbb{R}^3)\;: \; u=\hat{u}_0 \text{ on } \Gamma_0\},$$
where we assume $\hat{u}_0 \in W^{1,4}(\Omega).$

Furthermore, $\{H_{ijkl}\}$ is a fourth order symmetric constant tensor such that
$$H_{ijkl}t_{ij}t_{kl} \geq c_0 t_{ij}t_{ij}, \; \forall \text{ symmetric } t \in \mathbb{R}^{2 \times 2}$$
and
$$H_{ijkl}t_{mi}t_{mj}t_{kp}t_{lp} \geq c_1 \sum_{i,j=1}^3 t_{ij}^4,\; \forall \text{ symmetric } t \in \mathbb{R}^{2\times 2},$$
for some real constants $c_0>0, c_1>0$.

Under such hypotheses, there exists $u_0 \in U$ such that
$$J(u_0)=\min_{u \in U} J(u).$$
\end{thm}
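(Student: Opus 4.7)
The plan is to follow the direct method used for the previous elasticity theorem, now adapting it to two new complications: the inhomogeneous Dirichlet trace $\hat{u}_0$ on $\Gamma_0$ and the surface load $\langle P_i^t, u_i\rangle_{L^2(\Gamma_t)}$. First I would construct a symmetric positive-definite tensor $\{T_{ij}\}\subset L^\infty(\Omega;\mathbb{R}^{3\times 3})$ with $T_{ij,j}=-P_i$ in $\Omega$, by the obvious three-dimensional analogue of Remark \ref{us123W}: set $T_{ii}(x)=-\int_0^{x_i} P_i\,d\xi+C$ with $C$ large and the off-diagonals zero (valid since $P_i\in L^\infty$). Rather than also forcing $T_{ij}n_j=P_i^t$ on $\Gamma_t$---which would require a Neumann-type correction---I would leave the traction term untouched and absorb it later using the trace inequality and a partial-Dirichlet Poincar\'{e} inequality, which are available because $m_\Gamma(\Gamma_0)>0$ and $\hat u_0 \in W^{1,4}$.

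Next, the identity $\frac{1}{2}(u_{i,j}+u_{j,i})=v_{ij}(u)-\frac{1}{2}u_{m,i}u_{m,j}$ and integration by parts against $T$ give
\[
J(u)=\frac{1}{2}\int_\Omega H_{ijkl}v_{ij}v_{kl}\,dx-\int_\Omega T_{ij}v_{ij}(u)\,dx+\frac{1}{2}\int_\Omega T_{ij}u_{m,i}u_{m,j}\,dx+R(u),
\]
where $R(u)=\int_{\Gamma_0}T_{ij}n_j(\hat{u}_0)_i\,dS+\int_{\Gamma_t}(T_{ij}n_j-P_i^t)u_i\,dS$; the first summand of $R$ is a fixed constant, and the second is controlled by $C\|u\|_{L^2(\Gamma_t)}\leq C'(1+\|\nabla u\|_{L^2(\Omega)})$. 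The first coercivity $H_{ijkl}v_{ij}v_{kl}\ge c_0|v|^2$ combined with Young's inequality absorbs $-\int T\cdot v$ into $\tfrac{c_0}{4}\|v(u)\|_2^2$, while positive definiteness of $T$ yields $\tfrac{1}{2}\int T_{ij}u_{m,i}u_{m,j}\geq \tfrac{\lambda_*}{2}\|\nabla u\|_2^2$; the linear boundary contribution then absorbs into this quadratic. Thus $J$ is bounded below, and a minimizing sequence $\{u_n\}$ has $\|v(u_n)\|_2$ and $\|\nabla u_n\|_2$ uniformly bounded.

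The crux is then to upgrade these $L^2$ bounds to a uniform $W^{1,4}$ bound. Writing $a_n=\tfrac{1}{2}(\nabla u_n)^\top \nabla u_n=v(u_n)-\mathrm{sym}\,\nabla u_n$, one has $\|a_n\|_2$ bounded; the pointwise inequality $(a_n)_{ii}^2\geq\tfrac{1}{4}\sum_m((u_n)_{m,i})^4$, or equivalently the second coercivity hypothesis applied to $\int_\Omega H_{ijkl}(a_n)_{ij}(a_n)_{kl}\,dx$, yields $\|\nabla u_n\|_4\leq K'$. Poincar\'{e} in $W^{1,4}$ on $U$ together with $\hat u_0\in W^{1,4}$ then gives $\|u_n\|_{W^{1,4}(\Omega;\mathbb{R}^3)}\leq K$. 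Invoking the auxiliary theorem of Section 4 I extract, up to a subsequence, $u_0\in W^{1,4}\cap C(\overline{\Omega})$ with $u_n\rightharpoonup u_0$ weakly in $W^{1,4}$, strongly in $W^{1,3}$, and uniformly on $\overline{\Omega}$; continuity of the trace $W^{1,3}\to L^3(\partial\Omega)$ guarantees $u_0|_{\Gamma_0}=\hat u_0$, so $u_0\in U$. Strong $W^{1,3}$ convergence and H\"older give $a_n\to a(u_0)$ strongly in $L^{3/2}$, and the $L^2$ bound then forces $v(u_n)\rightharpoonup v(u_0)$ weakly in $L^2$. The convex continuous functional $v\mapsto\tfrac{1}{2}\int_\Omega H_{ijkl}v_{ij}v_{kl}\,dx$ is weakly lower-semicontinuous on $L^2$, and the linear body and surface-load terms pass to the limit by uniform convergence; hence $\alpha=\liminf_n J(u_n)\geq J(u_0)$ and $u_0$ minimizes $J$.

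The main obstacle is precisely the $L^2$-to-$L^4$ upgrade of $\nabla u_n$ just described, which is what forces the second coercivity hypothesis on $H$; by contrast, the adjustments for the mixed boundary conditions (inhomogeneous Dirichlet, surface traction) are routine, provided they are organized so that every linear-in-$\nabla u$ term absorbs into the quadratic $\|\nabla u\|_2^2$ produced by the positive-definiteness of $T$.
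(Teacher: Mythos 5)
Your proposal is correct and follows essentially the same route as the paper's own proof: the same auxiliary tensor $\{T_{ij}\}$ with $T_{ij,j}=-P_i$, the same substitution $\frac{1}{2}(u_{i,j}+u_{j,i})=v_{ij}(u)-\frac{1}{2}u_{m,i}u_{m,j}$ followed by integration by parts, the same invocation of the Section~4 compactness theorem to pass from a uniform $W^{1,4}$ bound to strong $W^{1,3}$ convergence and trace identification on $\Gamma_0$, and the same convexity/weak lower semicontinuity conclusion. The only difference is that you spell out the coercivity mechanism (Young's inequality to absorb the linear terms, and the $L^2$-to-$L^4$ upgrade of $\nabla u_n$ through the quadratic part $a_n=\frac{1}{2}(\nabla u_n)^{\top}\nabla u_n$ and the second hypothesis on $\{H_{ijkl}\}$), which the paper asserts but does not detail.
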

\begin{proof}
First observe that we may find a positive definite tensor $\{T_{ij}\} \subset L^\infty(\Omega;\mathbb{R}^{2\times 2})\cap W^{1,2}(\Omega;\mathbb{R}^{2\times 2})$ such that
$$T_{ij,j}+P_i=0, \text{ in } \Omega.$$

Hence, denoting
$$v_{ij}(u)=\frac{u_{i,j}+u_{j,i}}{2}+\frac{u_{m,i}u_{m,j}}{2},$$
we have
$$\frac{u_{i,j}+u_{j,i}}{2}=v_{ij}(u)-\frac{u_{m,i}u_{m,j}}{2},$$
so that, from this, the Gauss-Green formula and the Trace Theorem,
\begin{eqnarray}
J(u)&=&\frac{1}{2}\int_\Omega H_{ijkl}v_{ij}(u)v_{kl}(u)\;dx+\langle T_{ij,j},u_i\rangle_{L^2}-\langle P_i,u_i \rangle_{L^2}-\langle P^t_i,u_i \rangle_{L^2(\Gamma_t)} \nonumber \\ &=&
\frac{1}{2}\int_\Omega H_{ijkl}v_{ij}(u)v_{kl}(u)\;dx-\left\langle T_{ij},\frac{u_{i,j}+u_{j,i}}{2}\right\rangle_{L^2} \nonumber \\ &&+\langle u_i, T_{ij}\nu_j\rangle_{L^2(\Gamma_t)}+\langle (u_0)_i, T_{ij}\nu_j\rangle_{L^2(\Gamma_0)}-\langle P_i,u_i \rangle_{L^2}-\langle P^t_i,u_i \rangle_{L^2(\Gamma_t)} \nonumber \\ &=&\frac{1}{2}\int_\Omega H_{ijkl}v_{ij}(u)v_{kl}(u)\;dx-\left\langle T_{ij},v_{ij}(u)-\frac{u_{m,i}\;u_{m,j}}{2}\right\rangle_{L^2}\nonumber \\ &&+\langle u_i, T_{ij}\nu_j\rangle_{L^2(\Gamma_t)}+\langle (\hat{u}_0)_i, T_{ij}\nu_j\rangle_{L^2(\Gamma_0)}-\langle P_i,u_i \rangle_{L^2}-\langle P^t_i,u_i \rangle_{L^2(\Gamma_t)} \nonumber \\ &\geq&\frac{1}{2}\int_\Omega H_{ijkl}v_{ij}(u)v_{kl}(u)\;dx-\langle T_{ij},v_{ij}(u)\rangle_{L^2}+\left\langle T_{ij},\frac{u_{m,i}\;u_{m,j}}{2}\right\rangle_{L^2}\nonumber \\ &&-K_3\sum_{i=1}^3\|u_i\|_{1,4}-K_3\|\hat{u}_0\|_{1,4},\; \forall u \in U,
\end{eqnarray}
for some appropriate $K_3>0$.

From this, the hypotheses on $\{H_{ijkl}\}$ and a Poincar\'{e} type inequality, since $\{T_{ij}\}$ is positive definite, it is clear that $J$ is bounded below so that there exists $\alpha \in \mathbb{R}$ such that
$$\alpha=\inf_{u \in U} J(u).$$

Let $\{u_n\} \subset U$ be a minimizing sequence for $J$, that is, let such a sequence be such that
$$J(u_n) \rightarrow \alpha, \; \text{ as } n \rightarrow \infty.$$

Also from the hypotheses on $\{H_{ijkl}\}$ and a Poincar\'{e} type inequality, we have that there exists $K>0$ such that
$$ \|u_n\|_{1,4} \leq K,\; \forall n \in \mathbb{N}.$$

From the auxiliary result in the last section, there exists $u_0 \in C^0(\overline{\Omega};\mathbb{R}^3) \cap W^{1,4}(\Omega;\mathbb{R}^3)$ such that, up to a not relabeled subsequence,
$$u_n \rightarrow u_0, \text{ strongly in } W^{1,3}(\Omega;\mathbb{R}^3).$$

From such a latter result, up to a not relabeled subsequence, we may obtain
$$\frac{(u_n)_{i,j}+(u_n)_{j,i}}{2}+\frac{(u_n)_{m,i}(u_n)_{m,j}}{2} \rightharpoonup \frac{(u_0)_{i,j}+(u_0)_{j,i}}{2}+\frac{(u_0)_{m,i}(u_0)_{m,j}}{2}, \text{ weakly in } L^{3/2}(\Omega).$$

 Also from the continuity of the Trace operator we get $$u_0= \hat{u}_0, \text{ on } \Gamma_0,$$ so that
 $u_0 \in U.$

Since $L^{3/2}(\Omega)$ is reflexive, from the convexity of $J$ in $\{v_{ij}(u)\}$ and since $\{T_{ij}\}$ is positive definite, we have that
$$\alpha=\liminf_{n \rightarrow \infty}J(u_n) \geq J(u_0),$$
so that,
$$J(u_0)=\min_{u \in U} J(u).$$

The proof is complete.

\end{proof}

\end{document}